\newtheorem{proposition}{Proposition}[section]
\newtheorem{discussioncase}{Discussion}
\title{Iterative Generation and Generalized Degree Distribution of Higher-Order Fractal Scale-Free Networks}
\author{%
LIN QI\textsuperscript{1}%
\and 
JIA-XIN ZHANG\textsuperscript{1,}\textsuperscript{*}%
\\
\small \textsuperscript{1}School of Management Science and Engineering, Beijing Information Science and Technology University, \\
\small Beijing 102206, China\\
\small \textsuperscript{*}Corresponding author: jiaxinz@bistu.edu.cn
}
\date{}
\begin{document}

\maketitle

\begin{abstract}
Fractals represent one of the fundamental manifestations of complexity, and fractal networks serve as tools for characterizing and investigating the fractal structures and properties of large-scale systems. Higher-order networks have emerged as a research hotspot due to their ability to express interactions among multiple nodes. This study proposes an iterative generation model for higher-order fractal networks. The iteration is controlled by three parameters: the dimension $K$ of the simplicial complex, the multiplier $m$, and the iteration count $t$. The constructed network is a pure simplicial complex. Theoretical analysis using the similarity dimension and experimental verification using the box-counting dimension demonstrate that the generated networks exhibit fractal characteristics. When the multiplier $m$ is large, the generalized degree distribution of the generated networks is characterized by its scale-free nature. 
\end{abstract}

\noindent\textbf{Keywords}: fractality, higher-order networks, simplicial complex, scale-free networks

\section{Introduction}
Fractal theory provides a powerful framework for addressing fragmented and complex phenomena in nature. Within complex networks, numerous networks possess fractal characteristics, including hyperlinks on the World Wide Web, protein interaction networks, and co-occurrence networks of keyword terms in internet comment texts \cite{song_origins_2006, qi_internet_2022}. Numerous researchers have proposed distinct theoretical models of fractal networks and investigated their structural characteristics \cite{ma_understanding_2024}, yielding significant findings that reveal potential evolutionary drivers underlying the formation of fractal networks. 

Growing recognition of rich higher-order connectivity patterns in real-world systems \cite{lambiotte_networks_2019, benson_higher-order_2016, sizemore_knowledge_2018, giusti_twos_2016} is driving increased attention toward higher-order networks, as they can characterize interactions beyond the pairwise level. Concurrently, researchers have observed that certain networks within real-world systems exhibit both higher-order organization and fractal characteristics. Examples include partial phase synchronisation within simplicial complexes surrounding the brain's central regions \cite{tadic_multiscale_2022}, and information propagation within social hypernetworks \cite{luo_fractal_2024}. Furthermore, some studies simultaneously employ hypergraph and fractal theories to address problems \cite{8100392, ji_astute_2022}.

Network modelling is one of the key issues in network science, as it helps people understand how network structure influences network behaviour and can be used to improve network performance. Presently, few studies have explored models for constructing fractal networks based on higher-order structures. Researchers have developed distinct network models by classifying them according to two dimensions: the two different methods for constructing higher-order networks, and whether the network is fractal or pseudofractal. Ma et al. constructed a 2k uniform (1,3) flower hypernetwork model with fractal features \cite{ma_fractal_2021}. Zheng et al. demonstrated unconventional higher-order topological phenomena in fractal lattices featuring Sierpinski acoustic metamaterials \cite{zheng_observation_2022}. Xie et al. constructed pseudo-fractal simplicial complexes and investigated their combinatorial properties \cite{xie_combinatorial_2023}.

However, existing research on higher-order fractal networks has focused on simplicial complex of specific dimensions, or networks with relatively fixed hyperedge sizes and constrained fractal dimensions. To address these challenges, this study aims to establish an iterative generative model for higher-order fractal scale-free networks. We define an iterative algorithm whose behavior is determined by a triple of parameters $(K, m, t)$, representing the dimension of the simplicial complex, the multiplier, and the iteration count, respectively. The generated network is a pure $K$-dimensional simplicial complex. Its fractal dimension is controllable, and when the parameter $m$ is large, the network's generalized degree distribution exhibits scale-free properties.

\section{Preliminaries and Model Setup}
This section presents the fundamental concepts of higher-order networks and the model construction. 

\subsection{Fundamental Concepts of Higher-order Networks}
\label{subsec:Higher-order_networks}
A higher-order network is one which is capable of characterizing interactions among multiple nodes. One type comprises higher-order networks formed by hypernodes and hyperedges, while another consists of simplicial complex formed by simplices and their faces \cite{bianconi_higher-order_2021}. A set $\alpha = \{v_0, v_1, v_2, \cdots, v_d\}$ comprising $d + 1$ interacting nodes is termed a $d$-dimensional simplex $\alpha$, denoted as a $d$-simplex $\alpha$. For example, a 0-simplex is a point, a 1-simplex is an edge, a 2-simplex is a triangle, a 3-simplex is a tetrahedron, and so on. A face of a $d$-dimensional simplex $\alpha$ is a simplex $\beta$ formed by a proper subset of the simplex. The simplicial complex $\mathcal{K}$ is the set of simplices closed under inclusion of faces in each simplex, i.e. if $\alpha \in \mathcal{K}$ and $\beta \subset \alpha$, then $\beta \in \mathcal{K}$. The dimension $d$ of a simplicial complex is the maximum dimension of its simplices. In particular, a simplicial complex consisting of a set of $d$-dimensional simplices and their faces is a pure $d$-dimensional simplicial complex.

The $n$-skeleton of a $d$-dimensional simplicial complex $\mathcal{K}$ consists of all simplices of dimension at most $n$. Specifically, a 0-skeleton is the vertex set, while a 1-skeleton is denoted as $\mathcal{G} = \{\mathcal{V}, \mathcal{E}\}$, where $\mathcal{V}$ and $\mathcal{E}$ represent the vertex set and edge set of $\mathcal{G}$ respectively, as illustrated in Figure \ref{fig:skeleton}. A simplicial complex whose simplices are all the cliques (i.e., complete subgraphs) of the graph $\mathcal{G} = \{\mathcal{V}, \mathcal{E}\}$ is called the clique complex of the graph $\mathcal{G}$, as illustrated in Figure \ref{fig:skeleton}. The 1-skeleton of the clique complex of graph $\mathcal{G}$ is the graph $\mathcal{G}$ itself.

\begin{figure}[htbp]
    \centering  
    \begin{subfigure}[b]{0.3\textwidth}
        \centering
        \includegraphics[width=\linewidth]{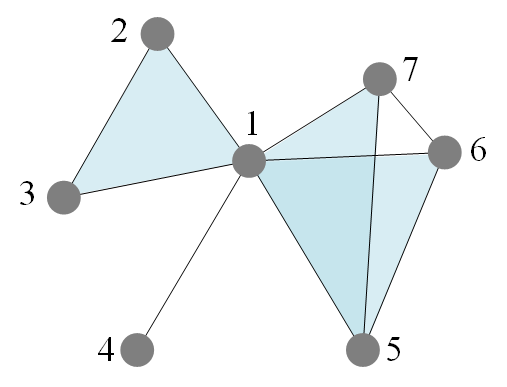}
        \caption{}
        \label{fig:skeleton-a}
    \end{subfigure}
    \begin{subfigure}[b]{0.3\textwidth}
        \centering
        \includegraphics[width=\linewidth]{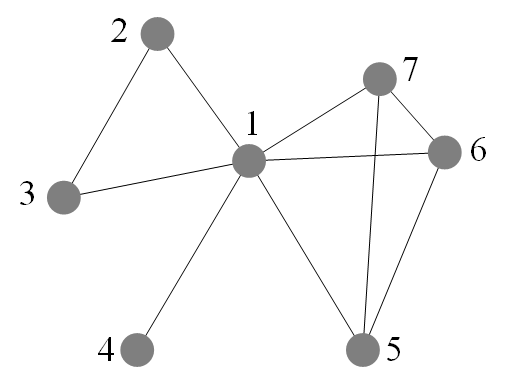}
        \caption{}
        \label{fig:skeleton-b}
    \end{subfigure}
    \begin{subfigure}[b]{0.3\textwidth}
        \centering
        \includegraphics[width=\linewidth]{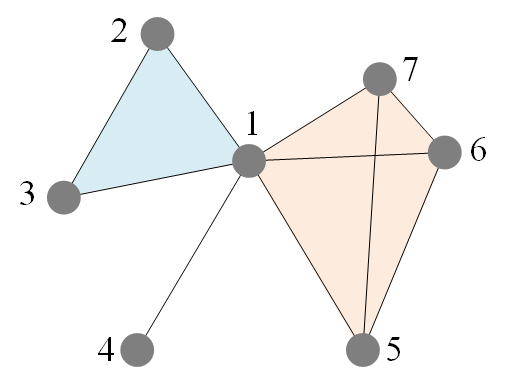}
        \caption{}
        \label{fig:skeleton-c}
    \end{subfigure}   
    \caption{\textbf{Simplicial Complex, 1-Skeleton, and Clique Complex. }(a) A two-dimensional simplicial complex, with blue triangles representing the 2-simplices it contains, edges denoting the 1-simplices it encompasses, and points signifying the 0-simplices it holds; (b) The 1-skeleton corresponding to the two-dimensional simplicial complex in (a) is also the 1-skeleton of the network in (c); (c) The clique complex generated by (b) is a simplicial complex composed of the 3-simplex $\{1,5,6,7\}$, the 2-simplex $\{1,2,3\}$, the 1-simplex $\{1,4\}$, and their faces. }
    \label{fig:skeleton}
\end{figure}

When studying network properties, one often focuses on the degree distribution, as it significantly influences the network's global dynamics. The concepts of degree and degree distribution can be naturally extended to simplicial complexes, where they are termed the generalized degree \cite{bianconi_network_2016} and the generalized degree distribution respectively. The generalized degree $k_{d,f}(\beta)$ denotes the number of $d$-simplices associated with an $f$-simplex $\beta$. When $d > f$, it denotes the number of $d$-simplices contained within the $f$-simplex $\beta$; when $d < f$, it denotes the number of $d$-dimensional faces within the $f$-simplex $\beta$, where $k_{d,f} = \binom{f + 1}{d + 1}$. In this paper, the probability $P_{d, l}(k)$ that any $l$-simplex in a pure $d$-dimensional simplicial complex $\mathcal{K}$ has a generalized degree of $k = k_{d,l}$ is defined as the $l$-dimensional generalized degree distribution of $\mathcal{K}$, where $l = 1, 2, \cdots, d - 1$. Subsequent sections shall investigate the generalized degree distributions of each dimension for the constructed network.

\subsection{Model}
The clique complex is a method for generating higher-order networks, treating all complete subgraphs within a graph as simplices within a simplicial complex to form a higher-order network. This study proposes a higher-order fractal network modelling approach that generates simplicial complexes through an iterative construction process. The specific network generation procedure is illustrated in Figure \ref{fig:T0T1}, as follows: 

\begin{enumerate}
    \item Initial: A $K$-dimensional simplex;
    \item At each time step: 
    \begin{enumerate}
        \item Identify all $K$-dimensional simplices within the current network (each complete subgraph comprising $K + 1$ nodes constitutes a $K$-dimensional simplex); 
        \item For each $K$-simplex $\alpha$, perform the following operation: 
        \begin{enumerate}
            \item Insert a new node (midpoint) within each edge of $\alpha$; 
            \item Starting from each vertex $v$ of $\alpha$, connect each of the $K$ midpoints adjacent to $v$ in pairs to form a $(K - 1)$-simplex $\beta$. For ease of notation, such a $(K - 1)$-simplex composed of midpoints is referred to herein as a bottom; 
            \item Introduce one new node $v'$, connected to each node of $\beta$ (repeat $m$ times. The parameter $m$ is termed the multiplier, where $m$ is a natural number, and the newly introduced node is referred to as the multiplication node); 
        \end{enumerate}
    \end{enumerate}
    \item When the number of iterations is less than $t$, repeat step 2. 
\end{enumerate}

When the dimension is $K$, the multiplier is $m$, and the iteration count is $t$, employing the method of clique complex treats all complete subgraphs in the graph as simplices, thereby yielding the simplicial complex $\mathcal{K}_{t}(K, m)$. This represents the union of all simplices within the network. The simplicial complex $\mathcal{K}_{t}(K, m)$ constitutes a pure $K$-dimensional simplicial complex, as it is composed of a set of $K$-simplices and their faces. Let the graph $\mathcal{G}_{t}(K, m)$ denote the 1-skeleton of $\mathcal{K}_{t}(K, m)$. 

One advantage of this model is that the dimension $K$ may be freely selected, thereby enabling the generation of simplicial complexes of various dimensions, as illustrated in Figure \ref{fig:tututu}.

\begin{figure}[htbp]
    \centering  
    \begin{subfigure}[b]{0.25\textwidth}
        \centering
        \includegraphics[width=\linewidth]{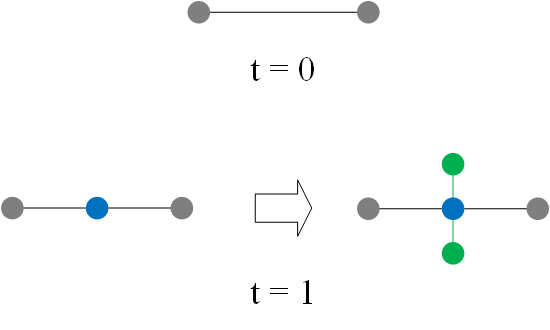}
        \caption{}
        \label{fig:T0T1-a}
    \end{subfigure}
    \qquad
    \begin{subfigure}[b]{0.3\textwidth}
        \centering
        \includegraphics[width=\linewidth]{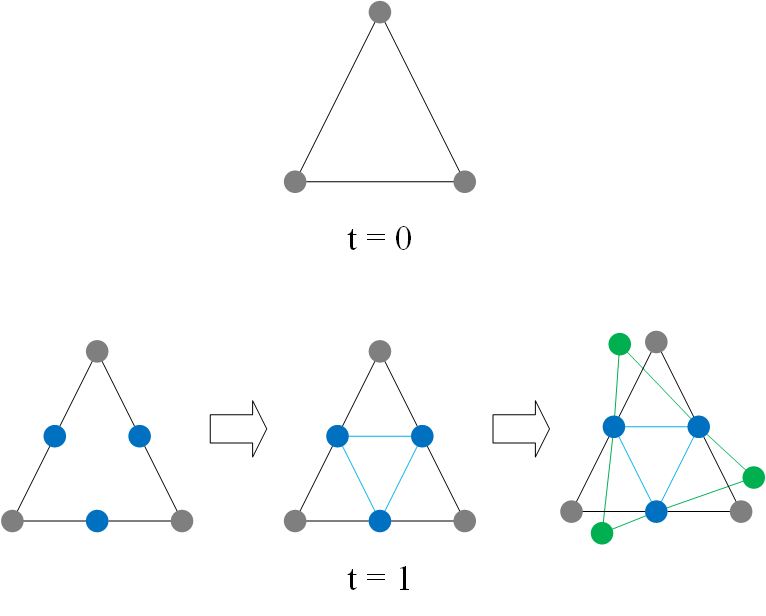}
        \caption{}
        \label{fig:T0T1-b}
    \end{subfigure}
    \begin{subfigure}[b]{0.3\textwidth}
        \centering
        \includegraphics[width=\linewidth]{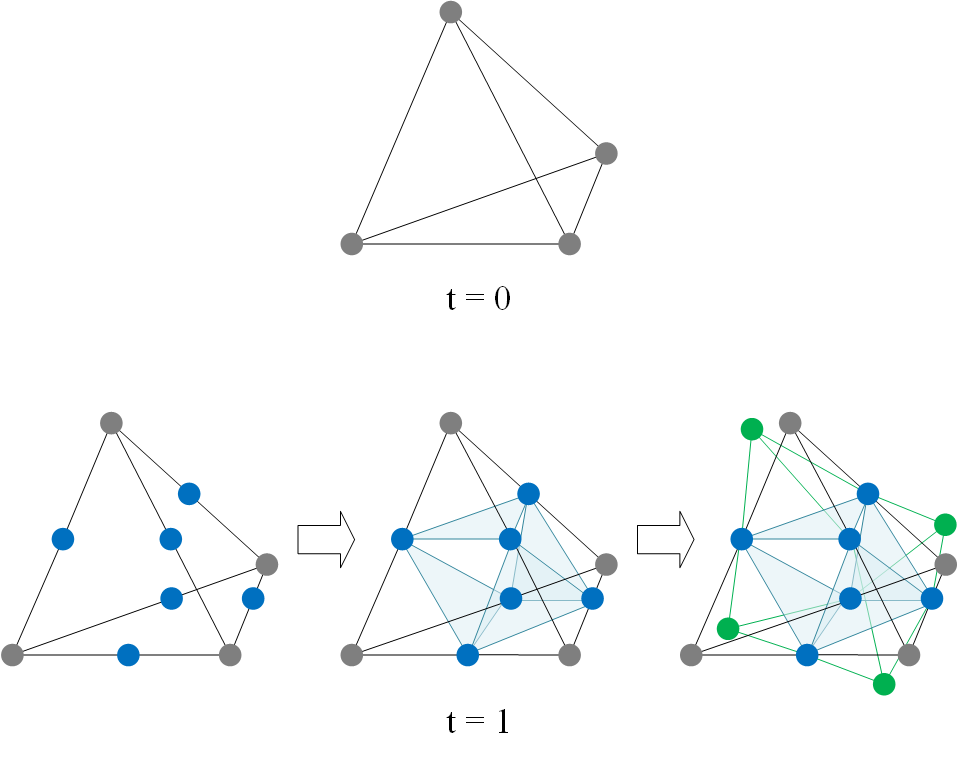}
        \caption{}
        \label{fig:T0T1-c}
    \end{subfigure}   
    \caption{\textbf{Schematic Diagram of the Iterative Construction Process. }The grey nodes in the diagram represent nodes present at the outset, the blue nodes denote midpoints inserted along edges during iteration, and the green nodes signify multiplier nodes. (a) Schematic of one iteration when $K = 1$ and $m = 2$, where the blue node form bottom; (b) Schematic of one iteration when $K = 2$ and $m = 1$, and the blue edges form bottoms; (c)Schematic of one iteration when $K = 3$ and $m = 1$, and the blue triangles form bottoms. }
    \label{fig:T0T1}
\end{figure}

\begin{figure}[htbp]
    \centering  
    \begin{subfigure}[b]{0.3\textwidth}
        \centering
        \includegraphics[width=\linewidth]{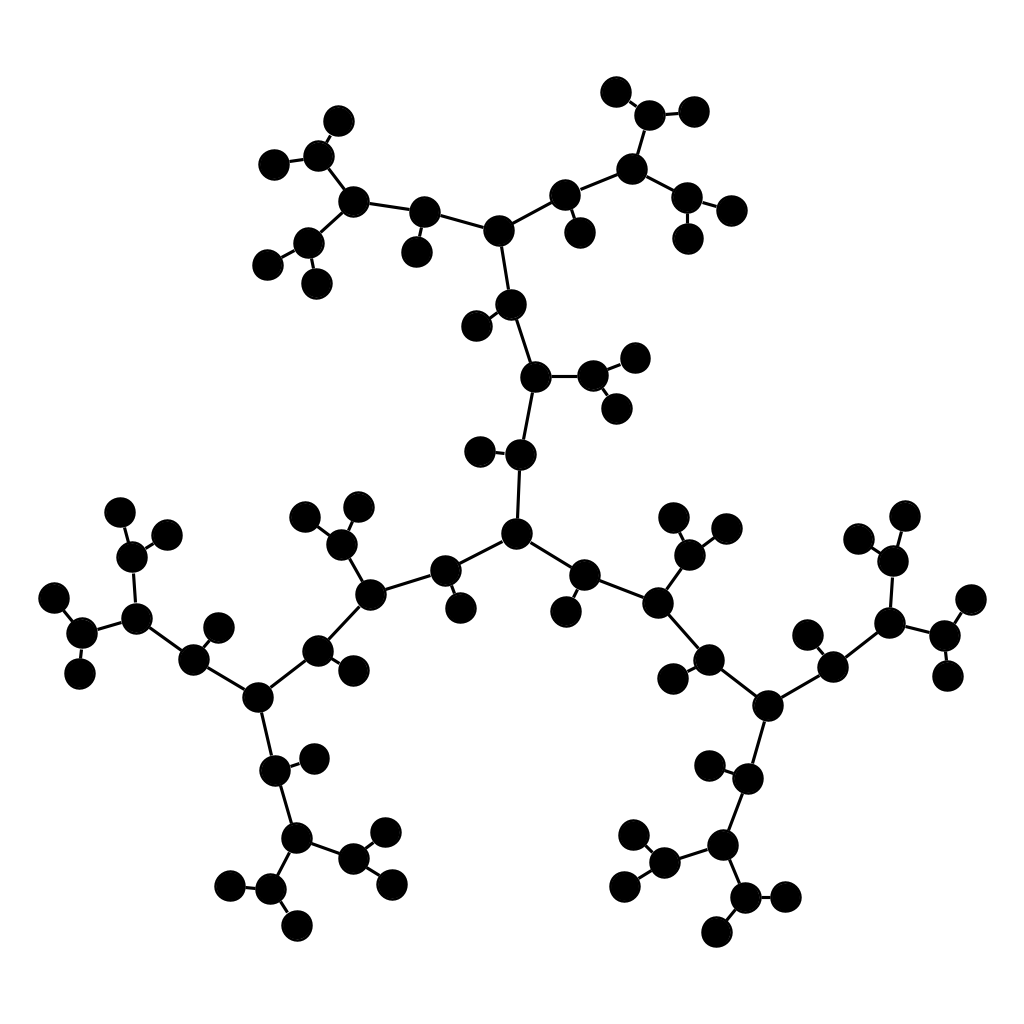}
        \caption{$\mathcal{G}_4(1, 1)$}
    \end{subfigure}
    \begin{subfigure}[b]{0.3\textwidth}
        \centering
        \includegraphics[width=\linewidth]{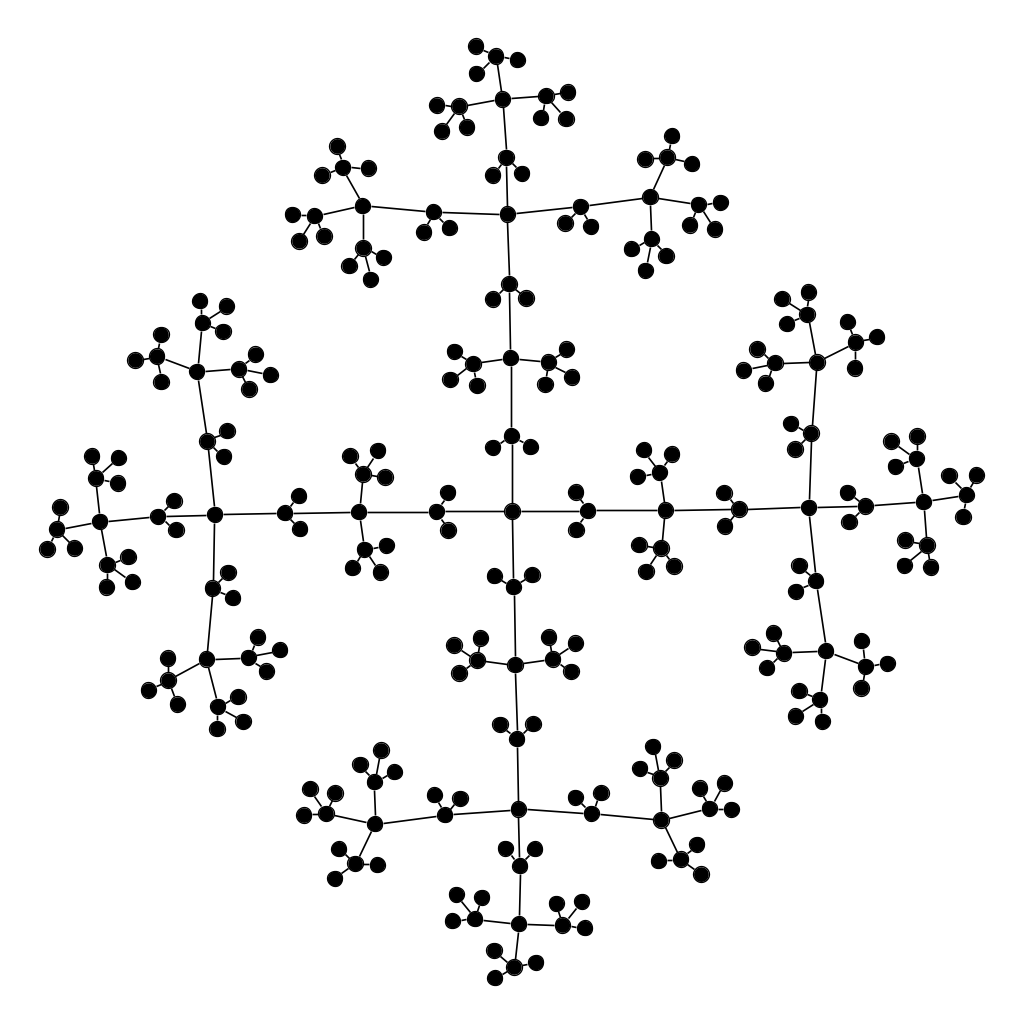}
        \caption{$\mathcal{G}_4(1, 2)$}
    \end{subfigure}
    \begin{subfigure}[b]{0.3\textwidth}
        \centering
        \includegraphics[width=\linewidth]{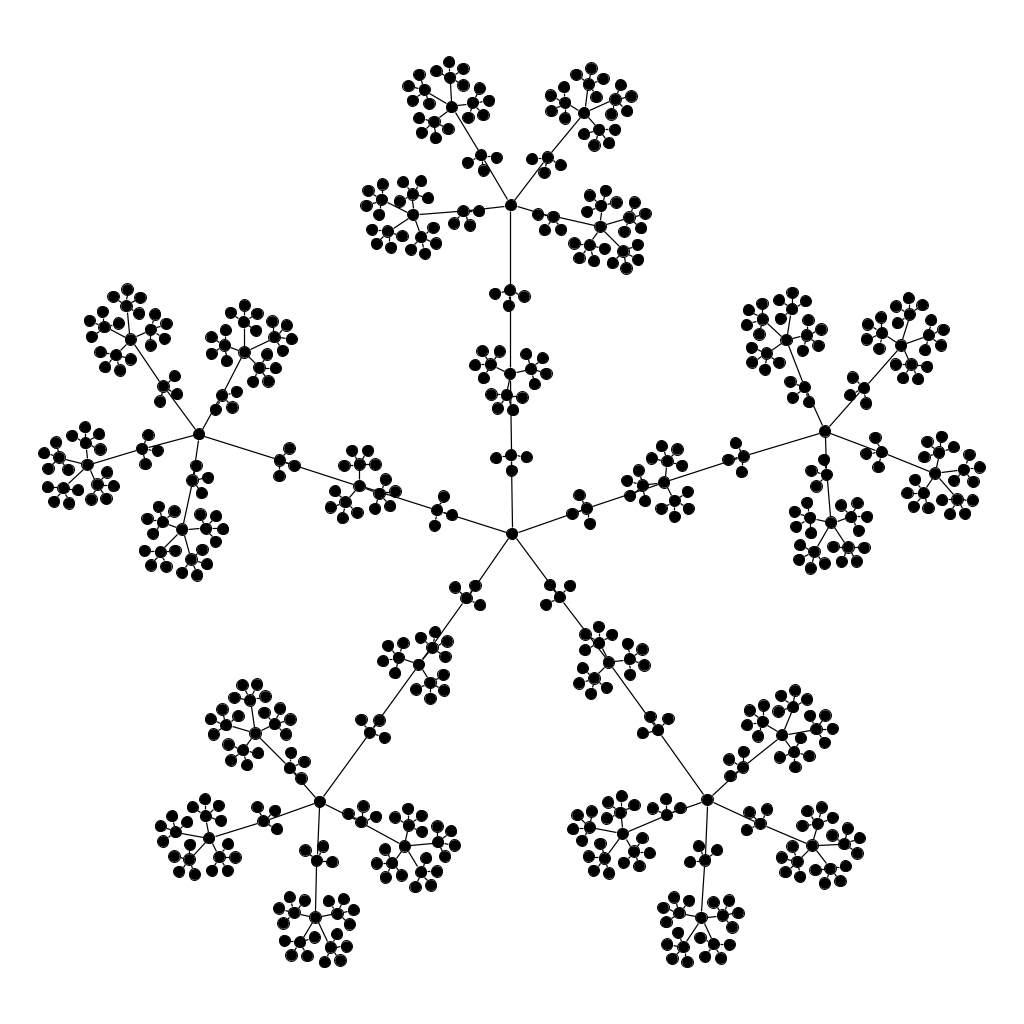}
        \caption{$\mathcal{G}_4(1, 3)$}
    \end{subfigure}   
    \begin{subfigure}[b]{0.3\textwidth}
        \centering
        \includegraphics[width=\linewidth]{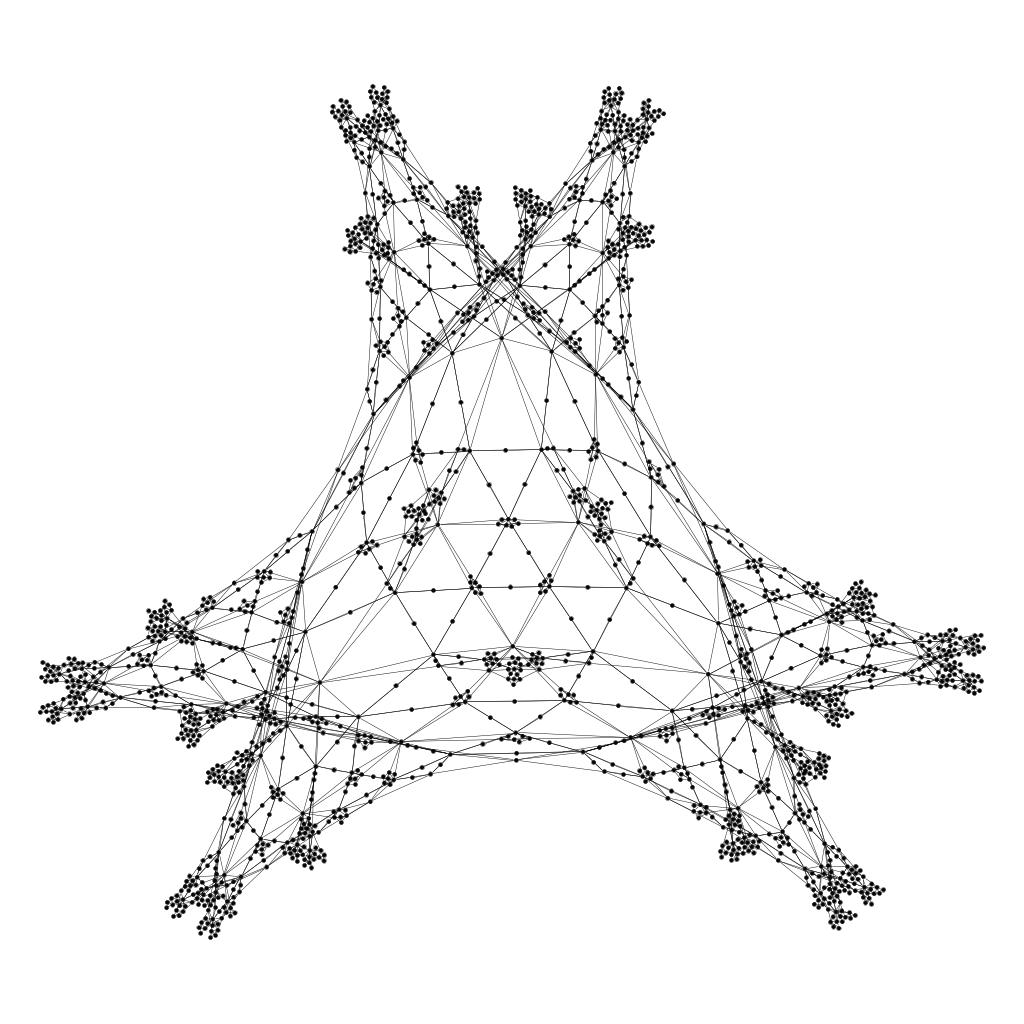}
        \caption{$\mathcal{G}_4(2, 1)$}
    \end{subfigure}
    \begin{subfigure}[b]{0.3\textwidth}
        \centering
        \includegraphics[width=\linewidth]{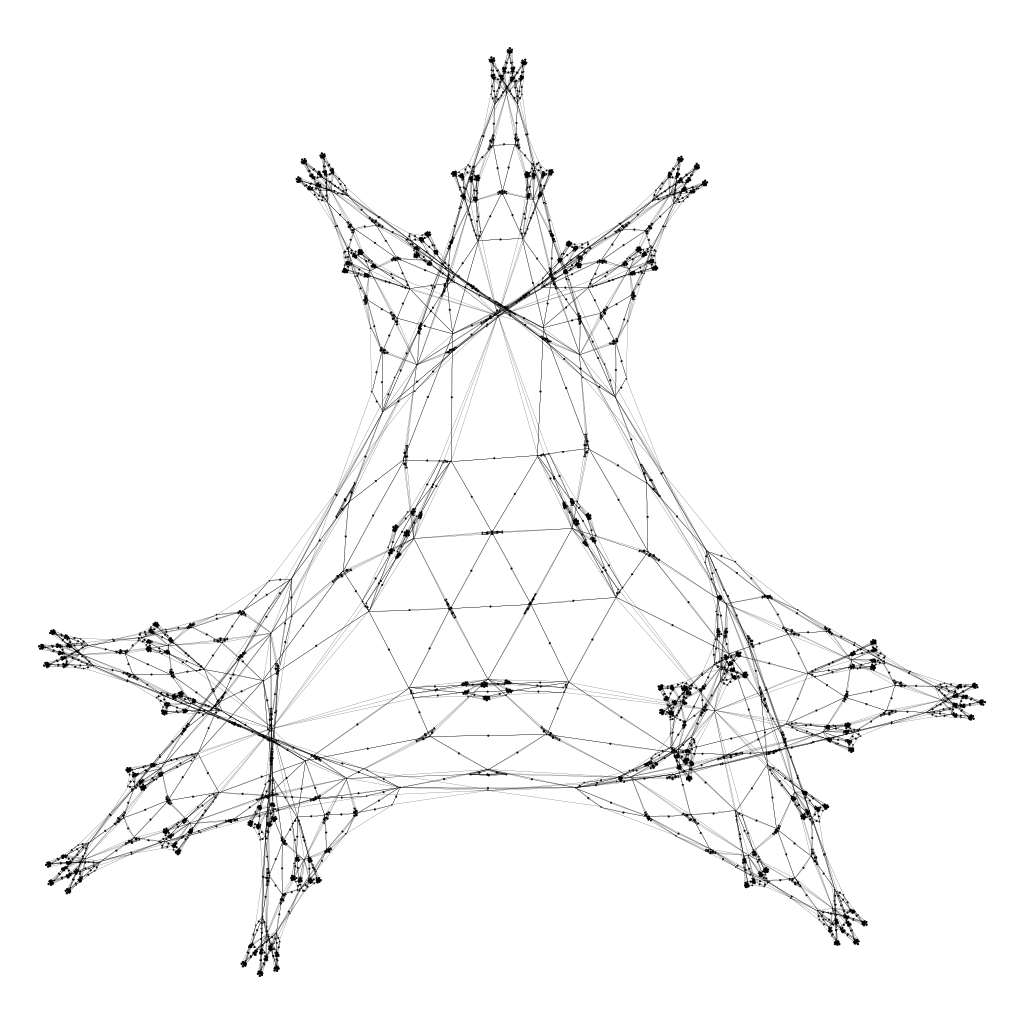}
        \caption{$\mathcal{G}_4(2, 2)$}
    \end{subfigure}
    \begin{subfigure}[b]{0.3\textwidth}
        \centering
        \includegraphics[width=\linewidth]{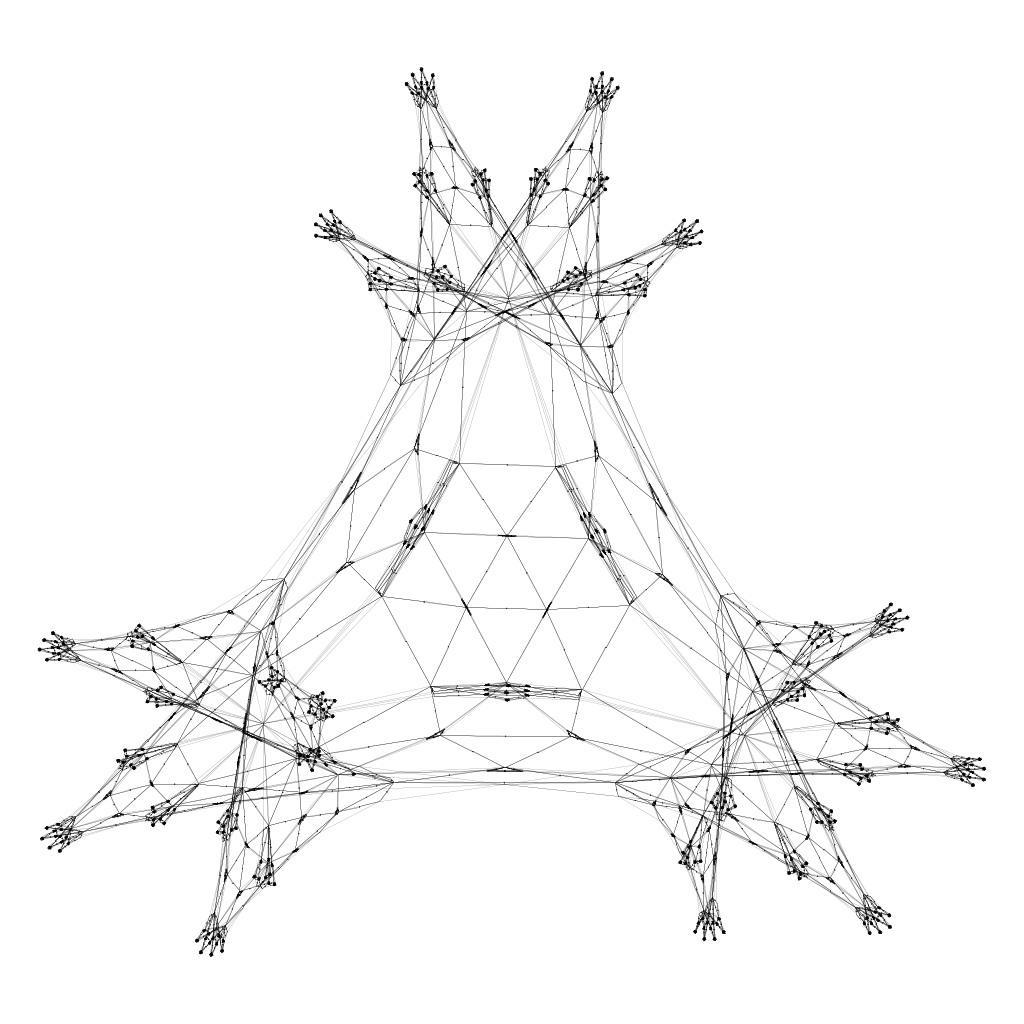}
        \caption{$\mathcal{G}_4(2, 3)$}
    \end{subfigure}
    \begin{subfigure}[b]{0.3\textwidth}
        \centering
        \includegraphics[width=\linewidth]{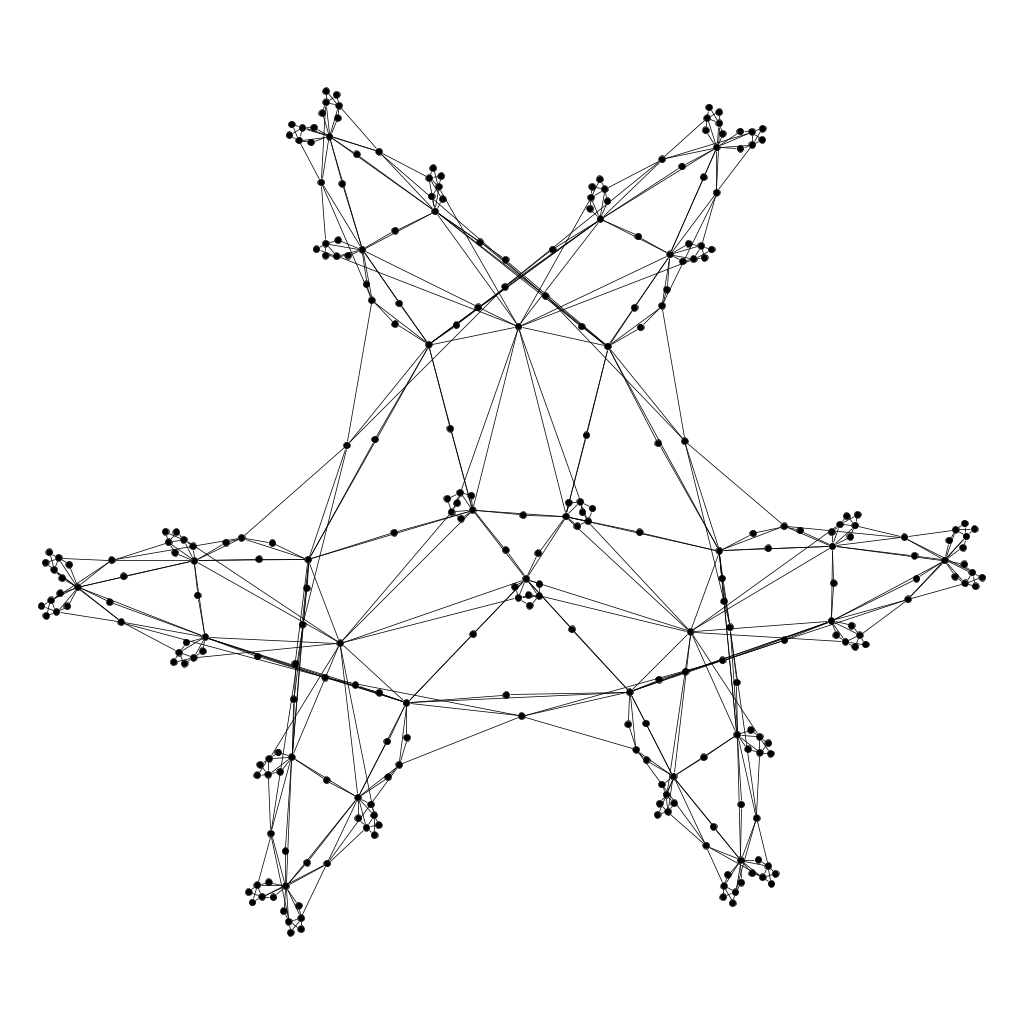}
        \caption{$\mathcal{G}_3(2, 1)$}
    \end{subfigure}
    \begin{subfigure}[b]{0.3\textwidth}
        \centering
        \includegraphics[width=\linewidth]{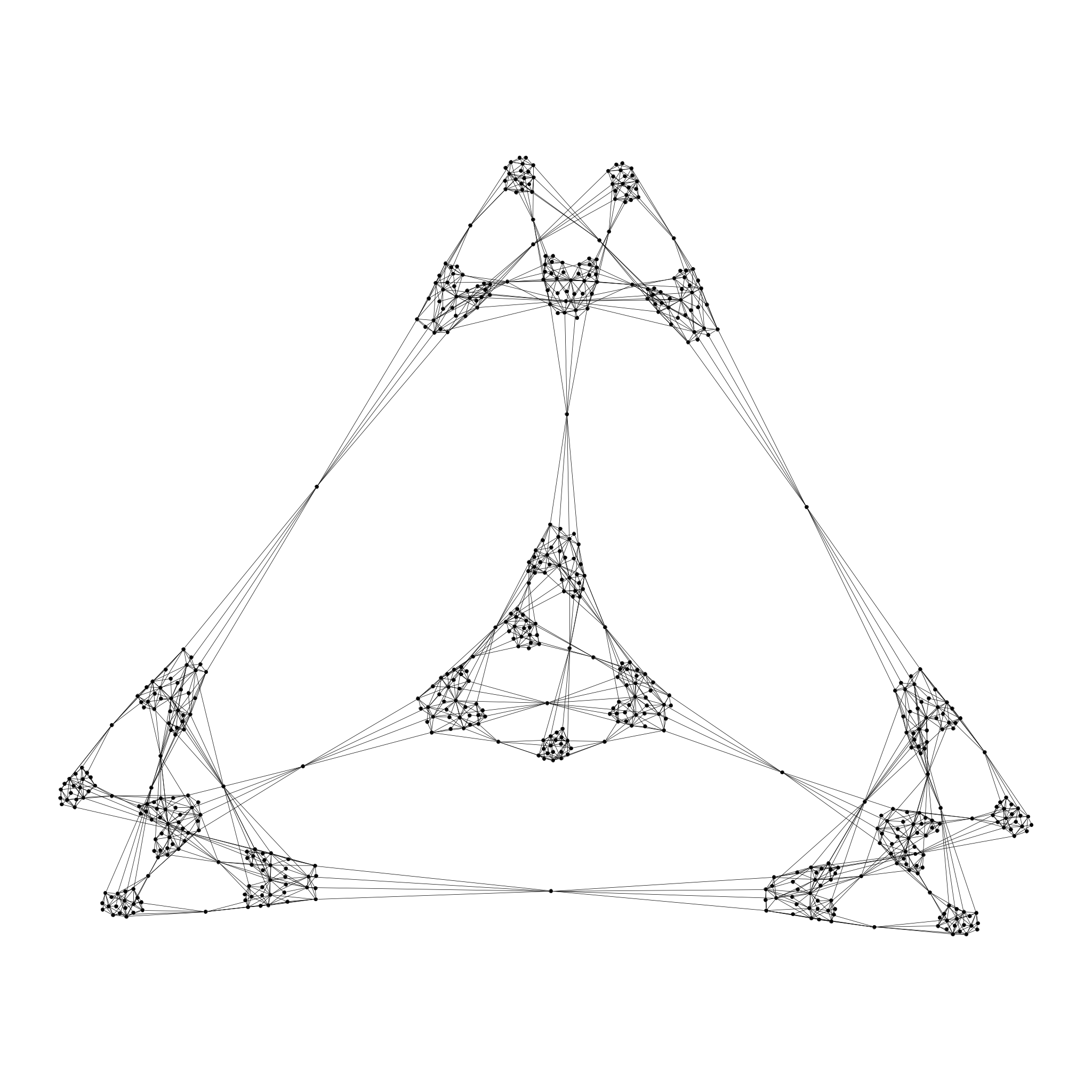}
        \caption{$\mathcal{G}_3(3, 1)$}
    \end{subfigure}
    \begin{subfigure}[b]{0.3\textwidth}
        \centering
        \includegraphics[width=\linewidth]{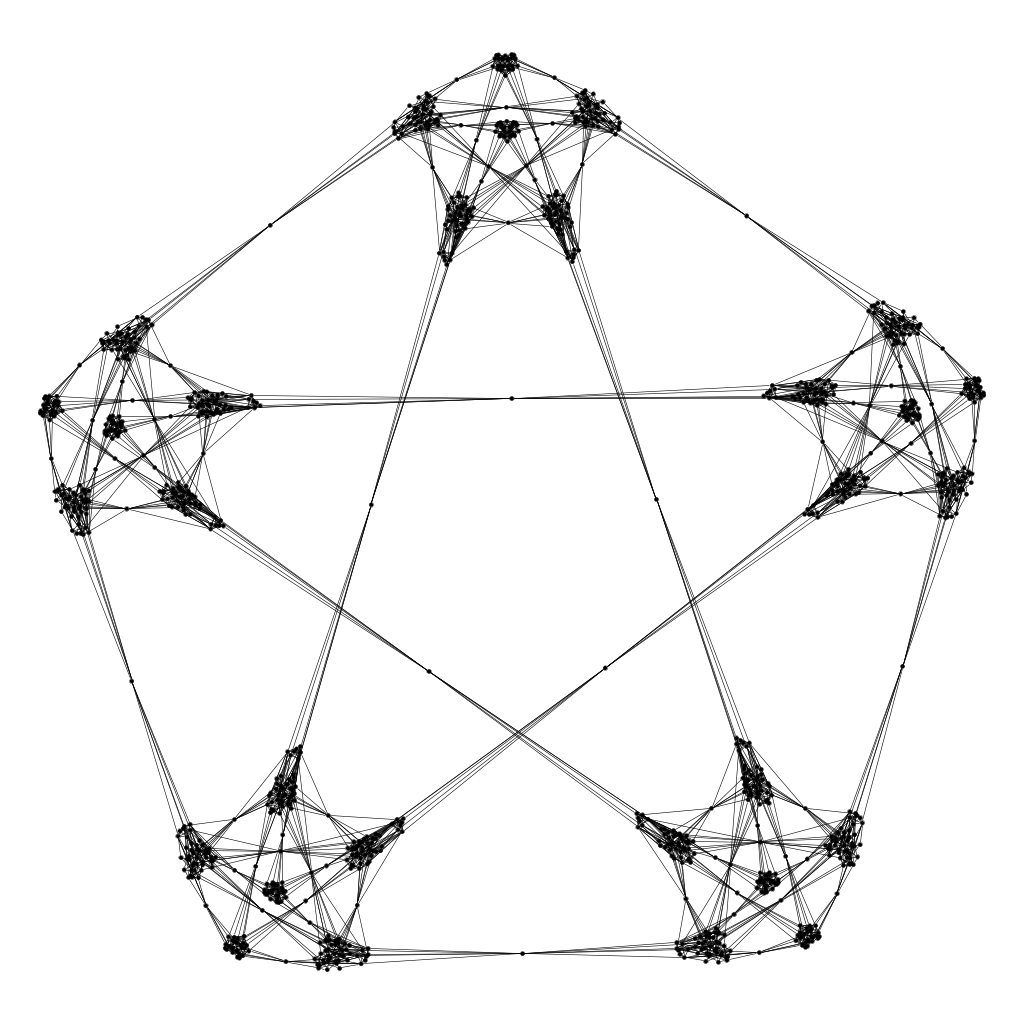}
        \caption{$\mathcal{G}_3(4, 1)$}
    \end{subfigure}
    \caption{\textbf{Networks Obtained After Multiple Iterations. }(a)-(c) Networks after four iterations with $K = 1$ for $m = 1, 2, 3$, respectively; (d)-(f) Networks after four iterations with $K = 2$ for $m = 1, 2, 3$, respectively; (g)-(i) Networks after three iterations with $m = 1$ for $K = 2, 3, 4$, respectively. }
    \label{fig:tututu}
\end{figure}

\section{Properties}
This section investigates certain properties of the higher-order network $\mathcal{K}_{t}(K, m)$ to elucidate its essential characteristics. Specifically, attention is focused on three properties: the number of $K$-simplices, the fractal dimension, and the generalized degree distribution. This is primarily because: (1) The number of $K$-simplices represents the primary metric for characterizing the scale of the network $\mathcal{K}_{t}(K, m)$; (2) By virtue of the iterative process, the network $\mathcal{K}_{t}(K, m)$ is inherently self-similar. We shall demonstrate below that it is also fractal, and the fractal dimension describes the dynamic evolution of fractal networks, hence necessitating its computation; (3) The generalized degree distributions of higher-order networks can be studied to determine whether they exhibit scale-free characteristics \cite{doi:10.1126/science.286.5439.509}. 

Now, let us begin by calculating the number of $K$-simplices contained within the pure $K$-dimensional simplicial complex $\mathcal{K}_{t}(K, m)$. 

\subsection{The Number of $K$-Dimensional Simplices}
According to the iterative generation rules of the network, the number of $K$-dimensional simplices exhibits exponential growth. Let $S$ denote the number of $K$-dimensional simplices contained in $\mathcal{K}_{t}(K, m)$ at time $t = 1$. According to the iterative process of the network, the number of $K$-simplices in $\mathcal{K}_{t+1}(K, m)$ at time $t+1$ is $S$ times that at time $t$. This is because each $K$-simplex at time $t$ will "transform" into a simplicial complex composed of $S$ $K$-simplices at the next time step. Therefore, the number of $K$-simplices in $\mathcal{K}_{t}(K, m)$ is $S^t$. $S$ is a function of dimension $K$ and multiplier $m$, and holds that: 
\begin{equation}
	S = 
	 \begin{cases} 
	m + 2,  & K = 1, \\
	3 m + 4, & K = 2, \\
	(m + 1) (K + 1), & K > 2. 
	 \end{cases}
	\label{eq:SKm}
\end{equation}
This is because the $K$-simplices in the $\mathcal{K}_{1}(K, m)$ can be divided into three categories, their number being the sum of the following three types of $K$-simplices: (1) $K$-simplices formed by a vertice at time $t = 0$ and the midpoints inserted at time $t = 1$, numbering $K + 1$; (2) $K$-simplices formed between midpoints. If the dimension $K$ of the simplicial complex is 2, then the number of such $K$-dimensional simplices is 1; otherwise, it is 0; and (3) the $K$-simplices formed by the bottom and the multiplier nodes. If the dimension $K$ of the simplicial complex is 1, then the number is $m$; otherwise, the number is $m \cdot (K + 1)$. 

\subsection{Fractal Dimension}
It is well known that for fractal figures, the fractal dimension is a crucial parameter used to describe the dynamic evolution of fractal networks. Numerous methods exist for calculating fractal dimensions within fractal geometry, one classic approach being the similarity dimension $d_s$ , frequently employed for regular self-similar geometric figures. If a whole set $\mathcal{A}$ can be partitioned into $N$ subsets of equal size, each of which is similar to the whole set $\mathcal{A}$ by a similarity ratio $r$, then the similarity dimension $d_s$ of $\mathcal{A}$ is defined as $d_s = \lim\limits_{r \to 0} (\log N / \log (1/r)) = - \lim\limits_{r \to 0} (\log N / \log r)$. In network science, there exist definitions for fractal networks and box-counting dimension. For a given box size $l_B$, the minimum number of boxes required to cover the network $\mathcal{G}$ is denoted by $N_B$. If a finite number $d_B$ exists such that $N_B  \varpropto \ell_B^{-d_B}$, then the network $\mathcal{G}$ is termed a fractal network, and $d_B$ is referred to as the box-counting dimension. 

\subsubsection{Similarity Dimension}
Clearly, when the network $\mathcal{K}_{t}(K, m)$ is partitioned into $S^n$ subsets, the similarity ratio between each subnetwork and the entire network is $1/2^n$. Therefore, the similarity dimension $d_s$  of $\mathcal{K}_{t}(K, m)$ is
\begin{align}
	d_s &= - \lim\limits_{n \to \infty} \frac{\log S^n}{\log 2^n} \notag \\ 
	&= \frac{\log S}{\log 2} 
	\label{eq:ds}
\end{align}
where $S$ is given by Equation \eqref{eq:SKm}. $d_s = \log (m+2) / \log 2$ for $K = 1$, $d_s = \log (3m+4) / \log 2$ for $K = 2$ and $d_s = \log (m+1)(K+1) / \log 2$ for $K \ge 3$. The theoretical value of the fractal dimension of the network, namely its similarity dimension, varies with two given parameters: the dimension $K$ of the simplicial complex and the multiplier $m$. Consequently, the fractal dimension of the higher-order fractal networks proposed in this study is controllable. 

\subsubsection{Box-counting Dimension}
The determination of the minimum number of boxes required given their sizes for the box-counting dimension is an NP-hard problem \cite{gallos_ReviewFractality_2007}. Various box-covering algorithms have been proposed to calculate the number of boxes needed for a covering network \cite{song_how_2007, sun_overlapping-box-covering_2014, guo_hub-collision_2022}. Based on whether the central node of the box is selected in advance, typical algorithms for computing box-counting dimensions are categorised into two types \cite{wen_fractal_2021}, and compact-box-burning (CBB) \cite{song_how_2007} and the overlapping-box-covering algorithm (OBCA) \cite{sun_overlapping-box-covering_2014} are representative algorithms for selecting central nodes and not selecting central nodes respectively. Employing the aforementioned algorithms CBB and OBCA, under various combinations of the two parameters $K$ and $m$, the fractal characteristics of the network were verified by averaging 100 independent replicate experiments \cite{zakar-polyak_towards_2023}. The resulting $d_{B}^{CBB}$ and $d_{B}^{OBCA}$ values are depicted in Figure \ref{fig:dsdB}. The computed box-counting dimension values are comparable to the similarity dimension.

\begin{figure}[htbp]
    \begin{subfigure}[b]{0.49\textwidth}
        \centering
        \includegraphics[width=\linewidth]{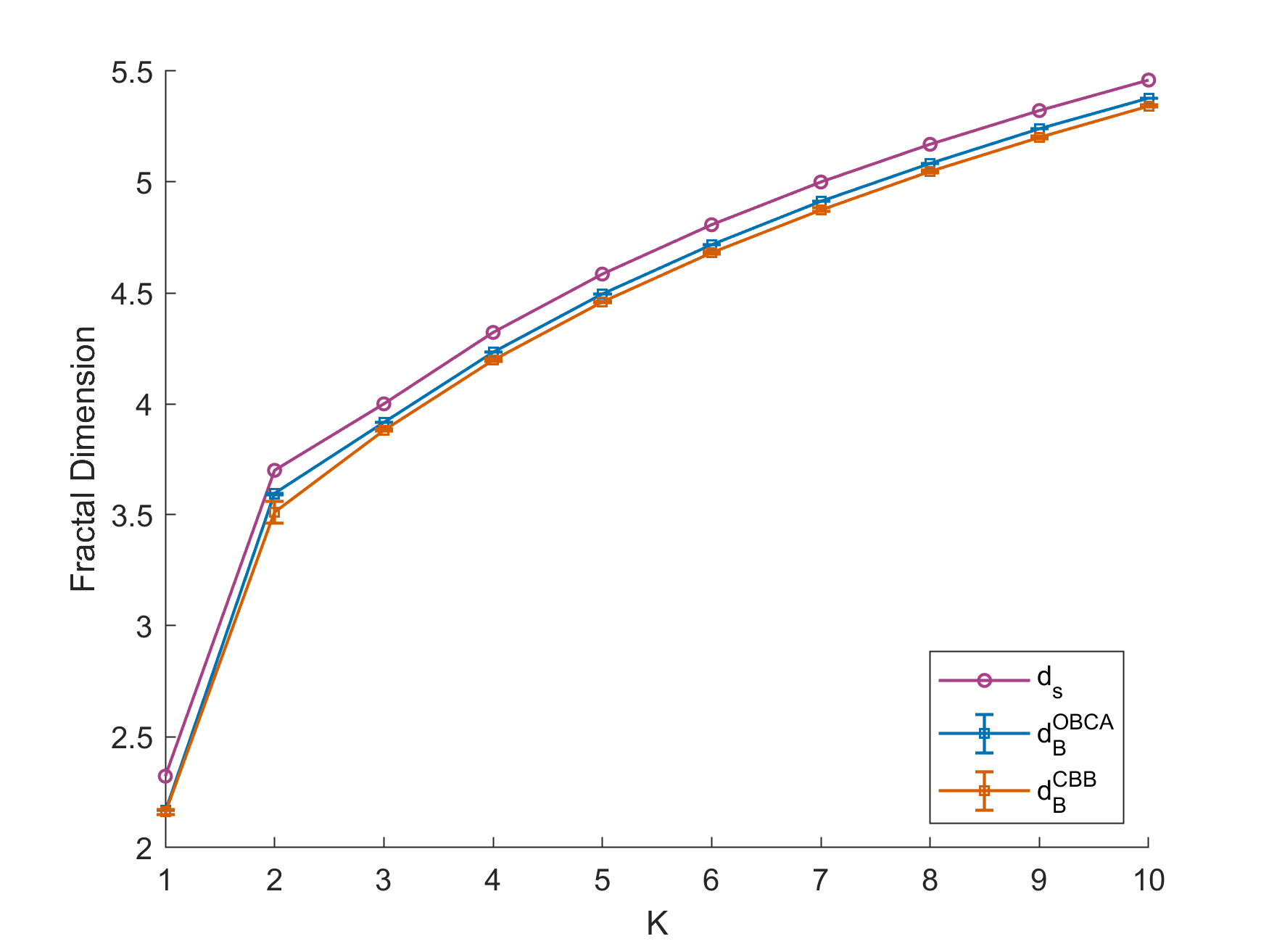}
        \caption{}
        \label{fig:dB_K}
    \end{subfigure}
    \begin{subfigure}[b]{0.49\textwidth}
        \centering
        \includegraphics[width=\linewidth]{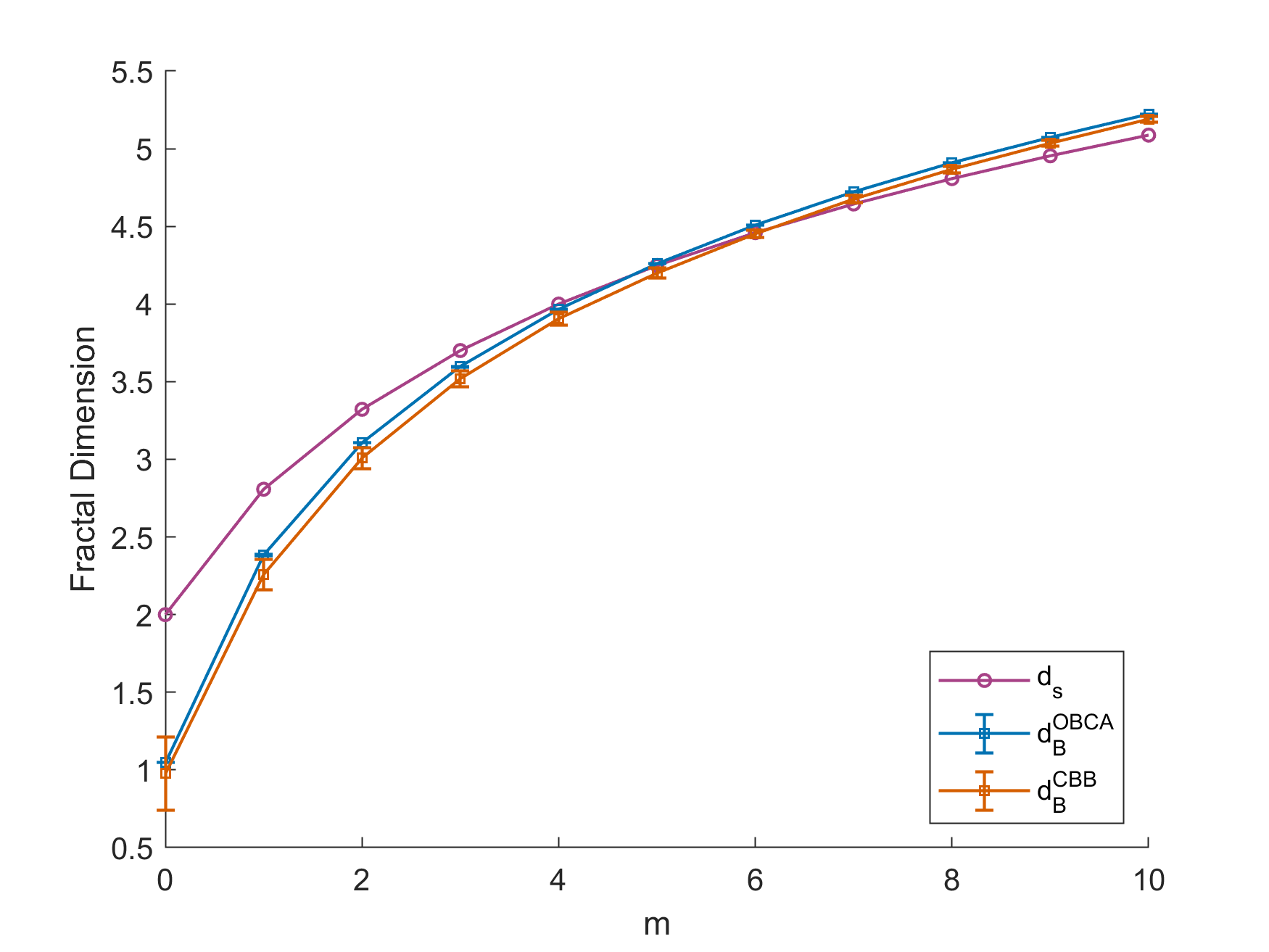}
        \caption{}
        \label{fig:dB_M}
    \end{subfigure}
    \caption{\textbf{Plot of the fractal dimension as a function of dimension $K$ and multiplier $m$. }(a) The fractal dimension of the graph generated with $m = 3$ after two iterations, for $K$ ranging from 1 to 10; (b) The fractal dimension of the graph generated with $K = 2$ after two iterations, for $m$ ranging from 0 to 10. }
    \label{fig:dsdB}
\end{figure}

\subsection{Generalized Degree Distribution}
In network science, degree distributions are extensively studied due to their impact on both the structure and functionality of entire networks\cite{10.1093/oso/9780198805090.001.0001}. Among various degree distributions, scale-free properties are prevalent across diverse networks, including the out-degree and in-degree distributions of the World Wide Web \cite{albert_diameter_1999}, actor networks, and power grids \cite{doi:10.1126/science.286.5439.509}. In higher-order networks, the generalized degree of a low-dimensional simplex represents its participation in higher-dimensional interactions, whilst the generalized degree distribution characterises the distribution of low-dimensional simplices engaging in such interactions. Next, we investigate the type of generalized degree distribution followed by the higher-order network $\mathcal{K}_{t}(K, m)$. Since $\mathcal{K}_{t}(K, m)$ is a pure simplicial complex, it encompasses simplices ranging from 0-dimensional to $K$-dimensional. Here, we investigate the generalized degree distribution for simplices of dimensions 1 to $K - 1$, where the definition of the generalized degree distribution is provided in subsection \ref{subsec:Higher-order_networks}. 

\textbf{Note: }The discussion pertains to the case where $K \ge 3$. The case where $K = 1$ is not addressed, as per the definition of the $l$-dimensional generalized degree distribution $P_{K, l}(k)$ for an $K$-dimensional simplicial complex in this paper, where $1 \le l \le K - 1$. Consequently, when the dimension $K = 1$, the discussion holds no significance. For the case where $K = 2$, its one-dimensional generalized degree distribution $P_{2, 1}(k)$ is in fact a two-point distribution, with the generalized degree taking only two values: 1 and $m + 2$. This discussion is not elaborated upon here. 

First, we investigate all possible values of the generalized degree for 1- to $(K - 1)$-dimensional simplices in the network and the corresponding number of simplices. 

\begin{proposition}
	When $t \ge K$, the generalized degree $k_{K,l}$ of an $l$-simplex in the network may take the value $(m + 1)^r$ ($r = 0, 1, 2, \cdots, K - L$), and the first occurrence time of an $l$-dimensional face whose generalized degree equals to $(m + 1)^r$ is $r$. Let $Y_{K}(l, t, r)$ denote the number of $l$-dimensional faces with generalized degree $k_{K,l} = (m + 1)^r$ at time $t$, and let $Y_{K}(l, t, 0)$ denote the number of $K$-simplexes. Then:

	 For $t = 0$,
	\begin{equation}
		Y_{K}(l, 0, r) =
		 \begin{cases} 
		\binom{K+1}{l+1},  & r=0, \\
		0, & \text{otherwise. }
		 \end{cases}
		 \label{eq:YKLt0}
	\end{equation}
	
	For $t \ge 0$ and $l = K$,
	\begin{equation}
		Y_{K}(K, t, r) =
		 \begin{cases} 
		S^t,  & r=0, \\
		0, & \text{otherwise. }
		 \end{cases}
		 \label{eq:YKKt}
	\end{equation}	

	For $t \ge 0$ and $l = 1, 2, 3, \dots, K - 1$,
	\begin{equation}
		Y_{K}(l, t, r) =
		 \begin{cases} 
		(l + 1) \cdot Y_{K}(l, t - 1, r) + (l + 2) \cdot Y_{K}(l + 1, t - 1, r - 1) ,  & 1 \le r \le K - L,\\
		(l + 1) \cdot Y_{K}(l, t - 1, r) + m \cdot (K+1) \cdot S^{t - 1} \cdot \binom{K}{l},  & r=0,\\
		0, & \text{otherwise. }
		 \end{cases}
		 \label{eq:YKLt_1}
	\end{equation}
	or, equivalently,
	\begin{equation}
		Y_{K}(l, t, r) =
			 \begin{cases} 
			(l + 1) \cdot Y_{K}(l, t - 1, r) + (l + 2) \cdot Y_{K}(l + 1, t - 1, r - 1) ,  & 1 \le r \le K - l,\\
			\binom{K + 1}{l + 1} \cdot S^t  - \sum_{j = 1}^{K - l} (m + 1)^j \cdot Y_{K}(l, t, j),  & r=0,\\
				0, & \text{otherwise. }
			 \end{cases}
		\label{eq:YKLt}
	\end{equation}
\end{proposition}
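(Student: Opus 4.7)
The plan is to prove the proposition by induction on the time step $t$, with the cases $t = 0$ and $l = K$ handled directly: at $t = 0$ the network is a single $K$-simplex so every $l$-face has generalized degree $1$, yielding $\binom{K+1}{l+1}$ faces at $r = 0$ and establishing Equation \eqref{eq:YKLt0}; for $l = K$ the ``$l$-faces'' are the $K$-simplices themselves, each containing itself uniquely, with total count $S^t$ by the exponential growth argument preceding Equation \eqref{eq:SKm}, establishing Equation \eqref{eq:YKKt}. For the inductive step with $1 \le l \le K - 1$ and $K \ge 3$, I would analyze how each $K$-simplex $\alpha$ at time $t - 1$ is subdivided into new $K$-simplices at time $t$, each being either a corner $\{u\} \cup B_u^{(\alpha)}$ at an original vertex $u$ with bottom $B_u^{(\alpha)}$, or a multiplication $\{v'\} \cup B_u^{(\alpha)}$ with a freshly created multiplication node $v'$. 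Every new $l$-face of positive generalized degree then falls into exactly one of three disjoint types based on its non-midpoint content: Type A.1 (one original vertex $u$ plus $l$ midpoints centered at $u$), Type A.2 (one multiplication node $v'$ plus $l$ midpoints from its hosting bottom), or Type B (purely $l + 1$ midpoints forming a subset of a single bottom).

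Next I would compute the generalized degree and the multiplicity for each type; these contributions assemble into the stated recurrence. A Type A.1 face $\{u, m_{u w_{i_1}}, \ldots, m_{u w_{i_l}}\}$ sits in the corner at $u$ within the subdivision of every $K$-simplex $\alpha' \supseteq \beta_{\mathrm{old}}$ at time $t - 1$, where $\beta_{\mathrm{old}} = \{u, w_{i_1}, \ldots, w_{i_l}\}$, and in no other new $K$-simplex, so its time-$t$ degree equals $k_{K,l}(\beta_{\mathrm{old}})$; every old $l$-face with degree $(m+1)^r$ thus produces $l + 1$ Type A.1 faces (one per choice of distinguished vertex), contributing $(l + 1)\, Y_K(l, t - 1, r)$. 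A Type B face $\{m_{u w_{i_1}}, \ldots, m_{u w_{i_{l+1}}}\}$ centered at $u$ sits in the corner at $u$ and in the $m$ multiplications at $B_u^{(\alpha')}$ within each $\alpha' \supseteq \gamma_{\mathrm{old}}$, where $\gamma_{\mathrm{old}} = \{u, w_{i_1}, \ldots, w_{i_{l+1}}\}$, giving degree $(m+1)\, k_{K,l+1}(\gamma_{\mathrm{old}})$; every old $(l+1)$-face with degree $(m+1)^{r-1}$ yields $l + 2$ Type B faces (one per choice of center), contributing $(l + 2)\, Y_K(l + 1, t - 1, r - 1)$ for $r \ge 1$. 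Each Type A.2 face is unique to its host multiplication simplex and therefore has degree exactly $1$, and direct counting gives $m(K+1)\binom{K}{l} S^{t-1}$ such faces contributing only at $r = 0$. Summing the three contributions gives Equation \eqref{eq:YKLt_1}, and the equivalent Equation \eqref{eq:YKLt} follows from the double-counting identity $\binom{K+1}{l+1} S^t = \sum_{r=0}^{K-l} (m+1)^r Y_K(l, t, r)$ obtained by counting $l$-face versus $K$-simplex incidences. The range $0 \le r \le K - l$ and the first-occurrence-time claim follow from the same recurrence: Type A.1 preserves the maximum exponent at level $l$ while Type B shifts the maximum $K - (l + 1)$ at level $l + 1$ upward by exactly one per time step, so the maximum exponent attained at $(l, t)$ is precisely $\min(t, K - l)$.

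The main obstacle will be rigorously verifying exhaustiveness and disjointness of this three-type classification and, in particular, showing that a Type B face uniquely determines its center $u$ so that it is not counted at multiple centers. The latter holds because, with $l + 1 \ge 2$ midpoints sharing a common endpoint, that common endpoint is uniquely pinned down: if $u$ and $u'$ were both common to all midpoints in the face, then $u' \in \{u, w_{i_j}\}$ for every $j$, forcing either $u' = u$ or all the distinct $w_{i_j}$ to coincide with $u'$, which is impossible. One subtlety that must be explicitly dismissed is that for $K \ge 3$ there can exist midpoint cliques not contained in any $K$-simplex (for example the triangle $\{m_{u w_1}, m_{u w_2}, m_{w_1 w_2}\}$ at the $2$-dimensional level); but such ``rogue'' cliques have generalized degree zero and therefore do not contribute to any $Y_K(l, t, r)$.
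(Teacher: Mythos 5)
Your proof is correct and follows essentially the same route as the paper: your Types A.1, B, and A.2 are exactly the paper's Discussions 1, 2, and 3 (an old vertex plus $l$ adjacent midpoints, a pure subset of a bottom, and a bottom subset plus a multiplication node), with the same degree computations ($k$ preserved, $k$ multiplied by $m+1$, and $k=1$ respectively), the same multiplicities $l+1$, $l+2$, and $m(K+1)\binom{K}{l}S^{t-1}$, and the same double-counting identity for the equivalent form of the $r=0$ branch. You are in fact somewhat more careful than the paper about exhaustiveness and disjointness of the classification, the uniqueness of the center of a Type B face, and the zero-generalized-degree midpoint cliques, all of which the paper passes over silently.
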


\begin{proof}
When $t = 0$, each $l$-dimensional face ($l = 1, 2, \cdots, K - 1$) in the network $\mathcal{K}_{t}(K, m)$ is contained within exactly one $K$-simplex, and their number is $\binom{K+1}{l+1}$, i.e., for $t = 0$, the number of $l$-simplices with generalized degree $k_{K,l} = 1$ is $\binom{K+1}{l+1}$, and the number of $l$-simplexes with other values of generalized degree $k_{K,l}$ is 0.

When $t \ge 1$, the $l + 1$ vertices of an $l$-simplex $\alpha$ in the graph ($l = 1, 2, \cdots, K - 1$) may be classified into the following three possibilities:

\begin{discussioncase}
$\alpha$ consists of a node at time $t - 1$ and $l$ nodes in a bottom at time $t$. 
\end{discussioncase}

At this point, $\alpha$ may be regarded as a new simplex generated from an $l$-simplex $\alpha'$ at time $t - 1$ (which has been partitioned by inserting midpoints along its edges). It consists of one vertex from $\alpha'$ and the $l$ adjacent midpoints. Consequently, the number of $K$-dimensional simplices contained within $\alpha$ remains identical to that of $\alpha'$, i.e., $k_{K,l}(\alpha) = k_{K,l}(\alpha')$. The resulting $l$-simplices thus constitutes $l + 1$ times the quantity of $l$-simplices with the same generalized degree as in the previous moment.

\begin{discussioncase}
$\alpha$ consists of $l + 1$ nodes in a bottom at time t. 
\end{discussioncase}

At this point, the vertices of $\alpha$ are the $l + 1$ midpoints adjacent to a single vertex of $\beta$, where $\beta$ is an $(l + 1)$-simplex at time $t - 1$. At time $t$, since $\alpha$ is contained within a bottom, $\alpha$ is connected to $m$ multiplier nodes and one node from time $t - 1$, hence $k_{K,l}(\alpha) = (m + 1) \cdot k_{K,l+1}(\beta)$. The number of $l$-simplices obtained is $l + 2$ times the number of $(l + 1)$-simplices from the previous moment;

\begin{discussioncase}
$\alpha$ consists of $l$ nodes in a bottom at time $t$ and a multiplied node connected to this bottom.
\end{discussioncase}

For $\alpha$, we have $k_{K,l}(\alpha) = 1$ because $\alpha$ is contained only in the $K$-simplex formed by this multiplier node and the bottom. The number of $l$-simplices obtained is $m \cdot b(K - 1) \cdot \binom{K}{l}$ times the number of $K$-simplices at this moment. Moreover, the number of $l$-simplices of this type may also be obtained by subtracting the product of the number of $l$-simplices of all other generalities and their respective generalities from the total number of $l$-simplices (counting with repetition). 

For a $(K - 1)$-dimensional simplex $\alpha$ at time $t = 1$, if $\alpha$ is a bottom, then its generalized degree $k_{K,K-1}(\alpha) = m + 1$; otherwise, $k_{K,K-1}(\alpha) = 1$.

By synthesising the above analysis, the preceding formula can be derived. 
\end{proof}

The degree distribution of many real-world networks is scale-free, meaning that the probability $P(k)$ of any randomly selected node having degree $k$ follows a power-law relationship: $P(k) \sim k^{-\gamma}$. When $m$ is sufficiently large, the network $\mathcal{K}_{t}(K, m)$ constructed in this paper becomes a scale-free network, whose degree distribution $P_{K, l}(k)$ follows a power-law distribution. 

According to the formula \eqref{eq:YKLt} describing the variation of the number of simplices over time, when $r = 0$, the expression contains a term involving $S^t$. Since $S > 1$, $S^t$ grows exponentially with time $t$. Therefore, it may be assumed that all $Y_K(l,t,r)$ will ultimately grow at a rate proportional to $S^t$ multiplied by some constant, i.e., assuming $Y_K(l,t,r) \approx c_{l,r} S^t$, where $c_{l,r}$ is a constant independent of time $t$ representing the growth factor. This analysis enables us to obtain the following proposition about the relations among the numbers of $l$-simplices at different generalized degrees. 

\begin{proposition}
	When $t$ is sufficiently large, the ratio of the number of $l$-simplices with generalized degree $(m + 1)^r$ to the number of $(l + 1)$-simplices with generalized degree $(m + 1)^{r - 1}$ becomes a constant value. That is, the relationship between $c_{l, r}$ and $c_{l+1, r-1}$ is given by:
	\begin{equation}
		c_{l, r} = \frac{l + 2}{S - l - 1} c_{l+1, r-1}
		\label{eq:cLLp1}
	\end{equation}

	In particular, $c_{l, 1} = \frac{l + 2}{S - l - 1} c_{l+1, 0}$ when $r = 1$. 
\end{proposition}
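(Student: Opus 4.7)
The plan is to take the ansatz $Y_K(l,t,r) \approx c_{l,r} S^t$ stated immediately before the proposition and substitute it into the top branch of recurrence \eqref{eq:YKLt}, which is valid for $1 \le r \le K - l$. First I would plug $Y_K(l,t,r) = c_{l,r} S^t$, $Y_K(l,t-1,r) = c_{l,r} S^{t-1}$, and $Y_K(l+1,t-1,r-1) = c_{l+1,r-1} S^{t-1}$ into the recurrence, yielding
\[
c_{l,r} S^t = (l+1)\, c_{l,r} S^{t-1} + (l+2)\, c_{l+1,r-1} S^{t-1}.
\]
Dividing through by $S^{t-1}$ and rearranging gives $c_{l,r}(S - l - 1) = (l+2)\, c_{l+1,r-1}$, which is exactly the claimed identity. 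The $r = 1$ specialization is a direct instance, so no separate argument is needed for it.

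The substantive part is justifying the ansatz. I would proceed by induction on $r$, starting at $r = 0$. For the base case, the second branch of \eqref{eq:YKLt} reads $Y_K(l,t,0) = (l+1) Y_K(l,t-1,0) + m(K+1)\binom{K}{l} S^{t-1}$, a first-order linear recurrence with forcing proportional to $S^{t-1}$. Its general solution is the sum of a homogeneous part growing as $(l+1)^t$ and a particular part proportional to $S^t$. For every admissible triple with $K \ge 3$ one has $S = (m+1)(K+1) \ge K+1 > l+1$, so the $S^t$-component dominates and $Y_K(l,t,0)/S^t$ converges to a positive constant $c_{l,0}$. For the inductive step, under the hypothesis $Y_K(l+1,t,r-1) \sim c_{l+1,r-1} S^t$, the recurrence for $Y_K(l,t,r)$ is again first-order linear with an $S^{t-1}$-type forcing term, and the same domination $S > l+1$ forces $Y_K(l,t,r)/S^t$ to converge to a finite limit $c_{l,r}$.

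The main obstacle I anticipate is tidying up the error term in the inductive step: the forcing is not exactly $(l+2)\, c_{l+1,r-1} S^{t-1}$ but rather $(l+2)\, c_{l+1,r-1} S^{t-1} + o(S^{t-1})$. To handle this cleanly I would unroll the first-order recurrence into the telescoping sum
\[
Y_K(l,t,r) = (l+1)^t\, Y_K(l,0,r) + \sum_{s=0}^{t-1} (l+1)^{t-1-s}\bigl[(l+2) Y_K(l+1,s,r-1)\bigr],
\]
split the bracketed factor into its leading $c_{l+1,r-1} S^s$ piece plus a lower-order remainder, and bound each contribution by an elementary geometric sum. Since $S > l+1$, the error terms are controlled by $o(S^t)$, and passing to the limit in the rearranged recurrence yields $c_{l,r} = \tfrac{l+2}{S-l-1}\, c_{l+1,r-1}$, completing the proof.
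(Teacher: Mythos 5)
Your first paragraph is exactly the paper's own proof: substitute the ansatz $Y_K(l,t,r) \approx c_{l,r}S^t$ into the top branch of \eqref{eq:YKLt}, divide by $S^{t-1}$, and rearrange. Where you go beyond the paper is in actually justifying the ansatz, which the paper introduces only with the heuristic remark that ``it may be assumed that all $Y_K(l,t,r)$ will ultimately grow at a rate proportional to $S^t$.'' Your induction on $r$ is sound: the base case $r=0$ is a first-order linear recurrence with forcing $m(K+1)\binom{K}{l}S^{t-1}$, whose particular solution $\propto S^t$ dominates the homogeneous $(l+1)^t$ part because $S=(m+1)(K+1)>K\ge l+1$ for $K\ge 3$ (and $c_{K,0}=1$ trivially from \eqref{eq:YKKt}); the inductive step unrolls the recurrence into the telescoping sum, whose leading term $\sum_{s=0}^{t-1}(l+1)^{t-1-s}S^s=\bigl(S^t-(l+1)^t\bigr)/(S-l-1)\sim S^t/(S-l-1)$ reproduces the claimed constant while the $o(S^{t-1})$ remainder in the forcing contributes only $o(S^t)$. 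This upgrades the proposition from a formal consequence of an unproved growth assumption to a genuine asymptotic statement, and as a byproduct it confirms that each $c_{l,r}$ exists and is finite, which the later propositions silently rely on. The only cosmetic caveat is that the induction is really on $r$ uniformly over all admissible $l$ (the step at $(l,r)$ uses the hypothesis at $(l+1,r-1)$), which your write-up implies but should state explicitly.
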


\begin{proof}
	Substituting $Y_K(l,t,r) \approx c_{l,r} S^t$ into the branch of Equation \eqref{eq:YKLt} for the case $1 \le r \le K - l$ yields: 
	\begin{equation}
		c_{l, r} S^t = (l + 1) c_{l, r} S^{t-1} + (l + 2) c_{l+1, r-1} S^{t-1}
	\end{equation}
	Rearranging the above expression yields Equation \eqref{eq:cLLp1}. 
\end{proof}

\begin{proposition}
	The growth rate of the number of l-simplices with generalized degree 1 is
	\begin{align}
	    c_{l, 0} = \binom{K + 1}{l + 1} \cdot \frac{m (l + 1)}{S - l - 1}
	      \label{eq:cL0}
	\end{align}
\end{proposition}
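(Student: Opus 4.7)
The plan is to substitute the asymptotic ansatz $Y_K(l,t,r) \approx c_{l,r} S^t$ (justified in the discussion preceding Proposition~3.2) directly into the $r = 0$ branch of the recurrence in Equation~\eqref{eq:YKLt_1}. The key observation is that this branch is \emph{decoupled} from all other generalized degrees: its inhomogeneous term $m(K+1)\binom{K}{l} S^{t-1}$ depends only on the total $K$-simplex count and a binomial factor, not on any $c_{l',r'}$ with $r' \ge 1$. Consequently, $c_{l,0}$ can be solved for in closed form in a single step, rather than via the cascade needed for $c_{l,r}$ with $r \ge 1$.

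First, I would substitute $Y_K(l,t,0) \approx c_{l,0} S^t$ and $Y_K(l,t-1,0) \approx c_{l,0} S^{t-1}$ into the $r = 0$ branch of \eqref{eq:YKLt_1} to obtain
\[
c_{l,0}\, S^t \;=\; (l+1)\, c_{l,0}\, S^{t-1} \;+\; m(K+1)\binom{K}{l}\, S^{t-1}.
\]
Second, I would divide through by $S^{t-1}$, move the $c_{l,0}$ terms to one side, and solve:
\[
c_{l,0}\,(S - l - 1) \;=\; m(K+1)\binom{K}{l}
\qquad\Longrightarrow\qquad
c_{l,0} \;=\; \frac{m(K+1)\binom{K}{l}}{S - l - 1}.
\]

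The final step is to reconcile this expression with the stated form. To do so, I would invoke the elementary binomial identity $(K+1)\binom{K}{l} = (l+1)\binom{K+1}{l+1}$, which follows immediately from expanding both sides as factorials since $\frac{(K+1)!}{l!(K-l)!}$ admits both decompositions. Substituting this identity into the closed form yields exactly $c_{l,0} = \binom{K+1}{l+1} \cdot m(l+1)/(S - l - 1)$, as claimed. There is no genuine obstacle in this proof: the conceptual work was done in the passage to the asymptotic ansatz before Proposition~3.2; the remaining argument is a one-line manipulation of the recurrence plus a routine binomial identity. The only thing worth a sanity check is that $S - l - 1 > 0$ for all admissible $l \in \{1, \dots, K-1\}$, which is clear from Equation~\eqref{eq:SKm} since $S \ge K+1$ in every case.
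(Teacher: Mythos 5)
Your proof is correct, but it takes a genuinely different and more economical route than the paper. The paper starts from the second form of the recurrence, Equation \eqref{eq:YKLt}, whose $r=0$ branch writes $Y_K(l,t,0)$ as $\binom{K+1}{l+1}S^t$ minus a sum weighted over all higher generalized degrees; extracting $c_{l,0}$ from that branch forces the paper to invoke the cascade relation \eqref{eq:cLLp1}, derive the normalization identity $\sum_{j=0}^{K-l}(m+1)^j c_{l,j}=\binom{K+1}{l+1}$ (Equation \eqref{eq:sumclj}), and then telescope by re-applying that identity at level $l+1$. You instead work from the first form, Equation \eqref{eq:YKLt_1}, whose $r=0$ branch carries the explicit inhomogeneous term $m(K+1)\binom{K}{l}S^{t-1}$ and is decoupled from every coefficient with $r'\ge 1$; the ansatz then gives $c_{l,0}(S-l-1)=m(K+1)\binom{K}{l}$ in one line, and the identity $(K+1)\binom{K}{l}=(l+1)\binom{K+1}{l+1}$ matches the stated form exactly. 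Your route is shorter, does not depend on the preceding proposition, and is in fact slightly stronger: the $r=0$ branch of \eqref{eq:YKLt_1} is an exact first-order linear recurrence whose solution is $(l+1)^t\binom{K+1}{l+1}+m(K+1)\binom{K}{l}\bigl(S^t-(l+1)^t\bigr)/(S-l-1)$, so the claimed growth rate follows rigorously from $S>l+1$ rather than from the heuristic ansatz. What the paper's longer route buys is the identity \eqref{eq:sumclj}, a degree-weighted face count that doubles as a consistency check between the two stated forms of the recurrence; your derivation bypasses it but, combined with \eqref{eq:cLLp1}, recovers it, so nothing essential is lost. Your closing sanity check that $S-l-1>0$ for all admissible $l$ is correct and worth keeping.
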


\begin{proof}
	Substituting Equation \eqref{eq:cLLp1} into the branch of Equation \eqref{eq:YKLt} for the case $r = 0$ yields: 
	\begin{equation}
		c_{l, 0} S^t = \binom{K + 1}{l + 1} S^t  - \sum_{j = 1}^{K - l} (m + 1)^j  c_{l, j} S^t
	\end{equation}
	Dividing both sides of the equation by $S^t$ yields
	\begin{equation}
		c_{l, 0} = \binom{K + 1}{l + 1} -  \sum_{j = 1}^{K - l} (m + 1)^j  c_{l, j}
		\label{eq:cL0mid}
	\end{equation}
	This is equivalent to
	\begin{equation}
		\sum_{j = 0}^{K - L} (m + 1)^j  c_{L, j} = \binom{K + 1}{L + 1}
		\label{eq:sumclj}
	\end{equation}
	Substituting \eqref{eq:cLLp1} into the right-hand side of the resulting equation \eqref{eq:cL0mid} and performing some straightforward arithmetic operations yields
	\begin{equation}
		c_{l, 0} = \binom{K + 1}{l + 1} - \frac{l + 2}{S - l - 1} \cdot (m + 1) \sum_{j = 0}^{K - l - 1}(m + 1)^j c_{l+1, j}
	\end{equation}
	Substituting the result obtained when \eqref{eq:sumclj} $L$ is taken as $l + 1$ and simplifying yields the value of $c_{l, 0}$ as shown in formula \eqref{eq:cL0}. 
\end{proof}

\begin{proposition}
	The transformation from low-dimensional to high-dimensional ratios of different generalized-degree faces is possible: 
	\begin{equation}
		\lim\limits_{t \to \infty} \frac{Y_K (l, t, r)}{Y_K (l, t, r + 1)} =  \lim\limits_{t \to \infty} \frac{Y_K (l+1, t, r-1)}{Y_K (l+1, t, r)}
	      \label{eq:YLtrYLp1trs1}
	\end{equation}
\end{proposition}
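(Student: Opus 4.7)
The plan is to reduce the limiting ratios to the constants $c_{l,r}$ introduced before Proposition 2 and then exploit the recurrence \eqref{eq:cLLp1} to show both sides equal the same expression.

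First, I would invoke the asymptotic approximation $Y_K(l,t,r) \approx c_{l,r} S^t$, which is justified in the discussion leading up to Proposition 2 by the dominant exponential growth of the $S^t$ term in the recursion \eqref{eq:YKLt}. Dividing numerator and denominator by $S^t$ on each side of \eqref{eq:YLtrYLp1trs1}, the two limits become
\[
\lim_{t\to\infty}\frac{Y_K(l,t,r)}{Y_K(l,t,r+1)} = \frac{c_{l,r}}{c_{l,r+1}},\qquad
\lim_{t\to\infty}\frac{Y_K(l+1,t,r-1)}{Y_K(l+1,t,r)} = \frac{c_{l+1,r-1}}{c_{l+1,r}},
\]
so the claim reduces to the purely algebraic identity $c_{l,r}/c_{l,r+1} = c_{l+1,r-1}/c_{l+1,r}$.

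Next I would apply the recurrence \eqref{eq:cLLp1} from Proposition 2 twice, once with index $r$ and once with index $r+1$, to obtain
\[
c_{l,r} = \frac{l+2}{S-l-1}\, c_{l+1,r-1},\qquad c_{l,r+1} = \frac{l+2}{S-l-1}\, c_{l+1,r}.
\]
Taking the ratio of these two equalities causes the common factor $(l+2)/(S-l-1)$ to cancel, giving exactly $c_{l,r}/c_{l,r+1} = c_{l+1,r-1}/c_{l+1,r}$, which is what we needed.

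The only genuine obstacle is the asymptotic step: one must check that $Y_K(l,t,r)/S^t$ actually converges to a finite positive constant $c_{l,r}$ for every admissible $l,r$, rather than just being bounded. I would justify this by induction on $r$ (or on $K-l-r$): for $r=0$ the claim follows from the explicit formula for the $r=0$ branch of \eqref{eq:YKLt}, and for $r\ge 1$ one solves the linear recurrence $Y_K(l,t,r) = (l+1) Y_K(l,t-1,r) + (l+2) Y_K(l+1,t-1,r-1)$ whose forcing term is, by the inductive hypothesis, asymptotically proportional to $S^{t-1}$; since $S > l+1$ (a straightforward check from \eqref{eq:SKm} for $K\ge 3$ and $1\le l\le K-1$), the homogeneous root $l+1$ is dominated by $S$ and the particular solution of order $S^t$ dictates the growth. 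Once this is established, the algebraic identity above closes the proof.
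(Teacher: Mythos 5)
Your proposal is correct and follows essentially the same route as the paper: substitute the asymptotic form $Y_K(l,t,r)\approx c_{l,r}S^t$ to reduce both limits to ratios of the constants, then apply the recurrence \eqref{eq:cLLp1} (at indices $r$ and $r+1$) so the common factor $(l+2)/(S-l-1)$ cancels. Your additional inductive justification that $Y_K(l,t,r)/S^t$ genuinely converges (using $S>l+1$ to dominate the homogeneous root) is a welcome tightening of a step the paper simply assumes.
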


\begin{proof}
	By assumption $Y_K(l,t,r) \sim c_{l,r} S^t$: 
	\begin{equation}
		\lim\limits_{t \to \infty} \frac{Y_K (l, t, r)}{Y_K (l, t, r + 1)} = \frac{c_{l, r}}{c_{l, r+1}}
	\end{equation}
	and
	\begin{equation}
		\lim\limits_{t \to \infty} \frac{Y_K (l+1, t, r-1)}{Y_K (l+1, t, r)} = \frac{c_{l+1, r-1}}{c_{l+1, r}}
	\end{equation}
	Substituting Equation \eqref{eq:cLLp1} into the above, which completes the proof. 
\end{proof}

\begin{proposition}
	Let $C_l$ denote the limit of the ratio of the number of $l$-dimensional faces with generalized degree $1$ to that with generalized degree $m + 1$ as $t \to \infty$. Then
	\begin{equation}
		C_l = \frac{(S - l - 2) (l + 1)}{(K - l) (l + 2)}
	      \label{eq:CL}
	\end{equation}
\end{proposition}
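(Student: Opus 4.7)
The plan is to express $C_l$ as a ratio of the growth constants $c_{l,0}$ and $c_{l,1}$, then reduce both of these to closed forms using the recurrences already established in Propositions 2 and 3.

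First, I would invoke the asymptotic ansatz $Y_K(l,t,r)\sim c_{l,r}S^t$ (which the earlier propositions have been built on) to write
\begin{equation}
C_l \;=\; \lim_{t\to\infty}\frac{Y_K(l,t,0)}{Y_K(l,t,1)} \;=\; \frac{c_{l,0}}{c_{l,1}} .
\end{equation}
The denominator $c_{l,1}$ is immediately accessible through Proposition 2 with $r=1$, giving $c_{l,1} = \frac{l+2}{S-l-1}\, c_{l+1,0}$. This reduces the problem to expressing the right-hand side purely through the ``generalized degree $1$'' constants $c_{l,0}$ and $c_{l+1,0}$, both of which are furnished by Proposition 3, formula \eqref{eq:cL0}.

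Second, I would substitute the explicit expressions
\begin{equation}
c_{l,0}=\binom{K+1}{l+1}\frac{m(l+1)}{S-l-1},\qquad
c_{l+1,0}=\binom{K+1}{l+2}\frac{m(l+2)}{S-l-2}
\end{equation}
into $C_l = c_{l,0}\bigl/\bigl(\tfrac{l+2}{S-l-1}c_{l+1,0}\bigr)$. The factors of $m$ and of $S-l-1$ cancel cleanly, leaving a ratio of binomial coefficients multiplied by a simple rational function of $l$, $S$, and $K$.

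Third, I would apply the elementary identity
\begin{equation}
\frac{\binom{K+1}{l+1}}{\binom{K+1}{l+2}} \;=\; \frac{l+2}{K-l}
\end{equation}
to collapse the binomial ratio, after which straightforward algebra yields $C_l=\frac{(S-l-2)(l+1)}{(K-l)(l+2)}$, matching \eqref{eq:CL}. I do not anticipate a genuine obstacle here; the only thing to be careful about is keeping track of the indices when shifting $l\mapsto l+1$ in Proposition 3 and verifying that the resulting formula is valid in the required range $1\le l\le K-2$ (so that both $c_{l,0}$ and $c_{l+1,0}$ are defined and the asymptotic ansatz applies). The mild subtlety worth flagging is the implicit assumption that $Y_K(l,t,r)\sim c_{l,r}S^t$ holds uniformly for the relevant $l$ and $r$; this was justified in the paragraph preceding Proposition 2, so it may be cited rather than reproved.
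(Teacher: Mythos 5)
Your proposal is correct and follows essentially the same route as the paper's proof: write $C_l = c_{l,0}/c_{l,1}$, convert $c_{l,1}$ to $c_{l+1,0}$ via the recurrence $c_{l,1}=\frac{l+2}{S-l-1}c_{l+1,0}$, and substitute the closed forms of $c_{l,0}$ and $c_{l+1,0}$ from the preceding proposition. The only difference is that you spell out the binomial-ratio cancellation that the paper leaves as "rearranging," and your algebra checks out.
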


\begin{proof}
	By the definition of $C_l$, we have $C_l = \lim\limits_{t \to \infty}\frac{Y_{K}(l, t, 0)}{Y_{K}(l, t, 1)}$. Substituting $Y_K(l,t,r) \sim c_{l,r} S^t$ yields 
	\begin{equation}
		C_l = \frac{c_{l, 0} S^t}{c_{l, 1} S^t}
	\end{equation}
	Substituting equation \eqref{eq:cLLp1} yields
	\begin{equation}
		C_l = \frac{S - l - 1}{l + 2} \cdot \frac{c_{l, 0}}{c_{l+1, 0}}
	\end{equation}
	From equation \eqref{eq:cL0}, the values of $c_{l, 0}$ and $c_{l + 1, 0}$ can be determined. Substituting these values and rearranging yields equation \eqref{eq:CL}. The value of $C_l$ has also been verified experimentally.
\end{proof}

\begin{proposition}
	As $t \to \infty$, the ratio of the number of $l$-dimensional faces with generalized degrees $(m + 1)^r$ and $(m + 1)^{r + 1}$ is given by $C_{l + r}$, namely
	\begin{equation}
		\lim\limits_{t \to \infty} \frac{Y_K (l, t, r)}{Y_K (l, t, r + 1)} =  C_{l + r}
		\label{eq:YLtrYLtrp1}
	\end{equation}
\end{proposition}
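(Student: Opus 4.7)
The plan is to proceed by finite induction on the index $r$, using the explicit evaluation of $C_l$ in \eqref{eq:CL} as the base case and the shift relation \eqref{eq:YLtrYLp1trs1} as the inductive step. The identity \eqref{eq:YLtrYLtrp1} is essentially the diagonal consequence of repeatedly trading one unit of simplex dimension against one unit of generalized-degree index, which is exactly the content of \eqref{eq:YLtrYLp1trs1}.

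For the base case $r = 0$, the left-hand side of \eqref{eq:YLtrYLtrp1} is by definition equal to $C_l$, and $C_l = C_{l+0}$, so the identity holds immediately. For the inductive step, I would assume the claim has been established for $r - 1$ in every admissible dimension. A single application of \eqref{eq:YLtrYLp1trs1} yields
\[
\lim_{t \to \infty} \frac{Y_K(l,t,r)}{Y_K(l,t,r+1)} \;=\; \lim_{t \to \infty} \frac{Y_K(l+1,t,r-1)}{Y_K(l+1,t,r)},
\]
and applying the inductive hypothesis at dimension $l + 1$ with index $r - 1$ identifies the right-hand side as $C_{(l+1)+(r-1)} = C_{l+r}$, closing the induction.

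I do not expect a real obstacle here, since all of the combinatorial substance has already been packaged upstream: \eqref{eq:cLLp1} encodes the shift $c_{l,r} = \tfrac{l+2}{S-l-1}\,c_{l+1,r-1}$ via the ansatz $Y_K(l,t,r) \sim c_{l,r} S^t$, and \eqref{eq:CL} evaluates the base ratio explicitly. What remains is pure bookkeeping: the transformation $(l,r) \mapsto (l+1,r-1)$ is iterated $r$ times and terminates on the base case in dimension $l+r$, which lies in the admissible range as long as $r \le K - l - 1$---exactly the range in which the ratio in \eqref{eq:YLtrYLtrp1} is nondegenerate. A more direct, non-inductive alternative is to iterate \eqref{eq:cLLp1} directly on the growth constants to obtain $c_{l,r}/c_{l,r+1} = c_{l+r,0}/c_{l+r,1} = C_{l+r}$ after telescoping cancellation of the shift factors.
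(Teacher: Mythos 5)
Your proposal is correct and matches the paper's argument: the paper's one-line proof (``combining equations \eqref{eq:YLtrYLp1trs1} and \eqref{eq:CL}'') is precisely the iterated application of the shift relation $(l,r)\mapsto(l+1,r-1)$ down to the base case $C_{l+r}$, which you have simply written out as an explicit induction. Your added remark on the admissible range $r \le K-l-1$ is a useful clarification the paper omits.
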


\begin{proof}
	Combining equations \eqref{eq:YLtrYLp1trs1} and \eqref{eq:CL} yields the result. This has also been verified experimentally.
\end{proof}

\begin{proposition}
	When $t$ is large and $m$ is large, the generalized degree distribution $P_{K, l}(k)$ of $l$-dimensional faces approximates a power-law distribution, namely
	\begin{align}
		P_{K, l}(k) \sim k^{-\gamma}
	      \label{eq:PKl}
	\end{align}
	Moreover, the approximate value of the exponent $\gamma$ in the power-law distribution is
	\begin{align}
		\gamma &\approx \frac{1}{(K - l) \log (m + 1)} \log \frac{l + 1}{K + 1} \binom{S - l - 2}{K - l}
	      \label{eq:gammaPKL}
	\end{align}
\end{proposition}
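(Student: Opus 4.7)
The plan is to exploit the fact that, by the first proposition of this subsection, the generalized degree $k_{K,l}$ of an $l$-simplex only takes the discrete values $(m+1)^r$ for $r=0,1,\dots,K-l$, so $P_{K,l}((m+1)^r)$ is proportional, up to the common normalization constant, to $Y_K(l,t,r)$. From \eqref{eq:YLtrYLtrp1}, the consecutive ratios of the distribution therefore converge to $C_{l+r}$, and iterating from $r=0$ up to $r=K-l-1$ gives
\begin{equation*}
\lim_{t\to\infty}\frac{P_{K,l}(1)}{P_{K,l}((m+1)^{K-l})} \;=\; \prod_{j=l}^{K-1} C_j.
\end{equation*}
If the target relation $P_{K,l}(k)\sim k^{-\gamma}$ is to hold, then equating it at the two endpoints of the support forces $(m+1)^{\gamma(K-l)}=\prod_{j=l}^{K-1} C_j$, so $\gamma$ is read off from this product by dividing by $(K-l)\log(m+1)$.

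The next step is to evaluate the product in closed form. Substituting $C_j=\frac{(S-j-2)(j+1)}{(K-j)(j+2)}$ from \eqref{eq:CL}, the factors $(j+1)/(j+2)$ telescope to $(l+1)/(K+1)$, the factor $\prod_{j=l}^{K-1}(K-j)$ equals $(K-l)!$, and $\prod_{j=l}^{K-1}(S-j-2)$ is a falling factorial which, combined with this $(K-l)!$ in the denominator, assembles into a binomial coefficient. The net effect is
\begin{equation*}
\prod_{j=l}^{K-1} C_j \;=\; \frac{l+1}{K+1}\binom{S-l-2}{K-l},
\end{equation*}
and substitution into the formula for $\gamma$ reproduces \eqref{eq:gammaPKL}.

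The main obstacle, and the reason the statement uses ``$\sim$'' and ``approximates'' rather than an exact equality, is that the per-step ratios $C_{l+r}$ genuinely depend on $r$, so strictly speaking $P_{K,l}$ cannot lie exactly on a single power curve $k^{-\gamma}$. The large-$m$ hypothesis is what makes the fit meaningful: for large $m$, $S\approx (m+1)(K+1)$ and hence $S-j-2\approx S$ uniformly in $j$, so each $C_{l+r}$ grows linearly in $m+1$ with a prefactor that varies only mildly with $r$. This is the consistency condition that allows the $K-l$ successive gaps to be approximated by a common factor $(m+1)^\gamma$, and the value extracted from the overall endpoint-to-endpoint ratio is the natural choice. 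A quantitative refinement would bound the variation of $\log C_{l+r}/\log(m+1)$ across $r$ and show it vanishes as $m\to\infty$; this is the only step that requires care beyond the direct algebra above.
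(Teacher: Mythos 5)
Your proposal is correct and follows essentially the same route as the paper: the exponent is extracted by matching the endpoints $k=1$ and $k=(m+1)^{K-l}$, the endpoint ratio is the product $\prod_{j=l}^{K-1}C_j=\frac{l+1}{K+1}\binom{S-l-2}{K-l}$ from Equations \eqref{eq:CL} and \eqref{eq:YLtrYLtrp1}, and the power-law form is justified by the near-constancy of the successive log-ratios for large $m$. Your version is in fact slightly more explicit than the paper's on the telescoping evaluation of the product and on why the large-$m$ hypothesis controls the deviation of the intermediate points from a single power curve.
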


\begin{proof}
	When $m$ is large, $C_{l+r} = \left({(S-l-r-2)(l+r+1)}\right)/\left({(K-l-r)(l+r+2)}\right) \approx {S}/({K-l-r})$. Thus, 
	\begin{equation}
		Y_K(l, t, r)  \approx Y_K(l, t, 0) \cdot \prod\limits_{j=0}^{r-1} \frac{K - l - j}{S}
	\end{equation}
	and
	\begin{equation}
	\log Y_K(l, t, r) \approx \log Y_K(l, t, 0) + \sum\limits_{j = 0}^{r - 1} \log \frac{K - l -j}{S}
	\end{equation}

	Since $\log \frac{K - l -j}{S}$ varies little with $j$, the following ratio also changes little with $r$:
	\begin{align}
	\frac{\log P_{K, l}(k(r + 1)) - \log P_{K, l}(k(r))}{\log k(r + 1) - \log k(r)} = \frac{\log \frac{Y_K(l, t, r + 1)}{\sum_{j} Y_K(l, t, j)} - \log \frac{Y_K(l, t, r)}{\sum_{j} Y_K(l, t, j)}}{\log (m + 1)^{r + 1} - \log (m + 1)^r}
	\end{align}

	This indicates that in the double-logarithmic coordinate system, $(k, P_{K, l}(k))$ approximates a linear distribution, as illustrated in Figures \ref{fig:GDD1} and \ref{fig:GDD2}. Consequently, the generalized degree distribution $P_{K, l}(k)$ approximates a scale-free distribution, as expressed in equation \eqref{eq:PKl} within the proposition. 

	Based on the preceding analysis, the first and last points in the distribution may be treated as fitting targets to estimate the value of the power exponent $\gamma$. Combining Equations \eqref{eq:CL} and \eqref{eq:YLtrYLtrp1}, when $t$ is sufficiently large, the number of $l$-dimensional faces with generalized degree $k = k(0) = (m + 1)^0 = 1$ is given by
	\begin{equation}
		N_l (1) = Y_K(l, t, 0) \approx Y_K(l, t, K - L) \cdot \prod\limits_{j = l}^{K - 1} C_j = \frac{l + 1}{K + 1} \cdot \binom{S - l - 2}{K - l} \cdot Y_K(l, t, K - l)
		\label{eq:N1rmin}
	\end{equation}
	thereby
	\begin{equation}
		\gamma \approx -\frac{\log P_{K, l}((m + 1)^{K - l}) - \log P_{K, l}(1)}{\log (m + 1)^{K - l} - \log 1}
	\end{equation}
	Simplify to obtain $\gamma \approx \frac{1}{(K - l) \log (m + 1)} \log \frac{l + 1}{K + 1} \binom{S - l - 2}{K - l}$. Compared to the $\gamma$ value obtained by fitting all data points using the least squares method, the estimate derived from equation \eqref{eq:gammaPKL} shows no significant difference, as illustrated in Figures \ref{fig:GDD1} and \ref{fig:GDD2}. 
\end{proof}

\begin{figure}[htbp]
    \centering
    
    \begin{minipage}{0.04\textwidth}
        \centering
        \rotatebox{90}{Generalized degree distributions, $P_{K, 1}(k_{K, 1})$}
    \end{minipage}%
    \hfill
    \begin{minipage}{0.94\textwidth}
        \centering
        
        \begin{subfigure}{0.3\textwidth}
            \centering
            \includegraphics[width=\linewidth]{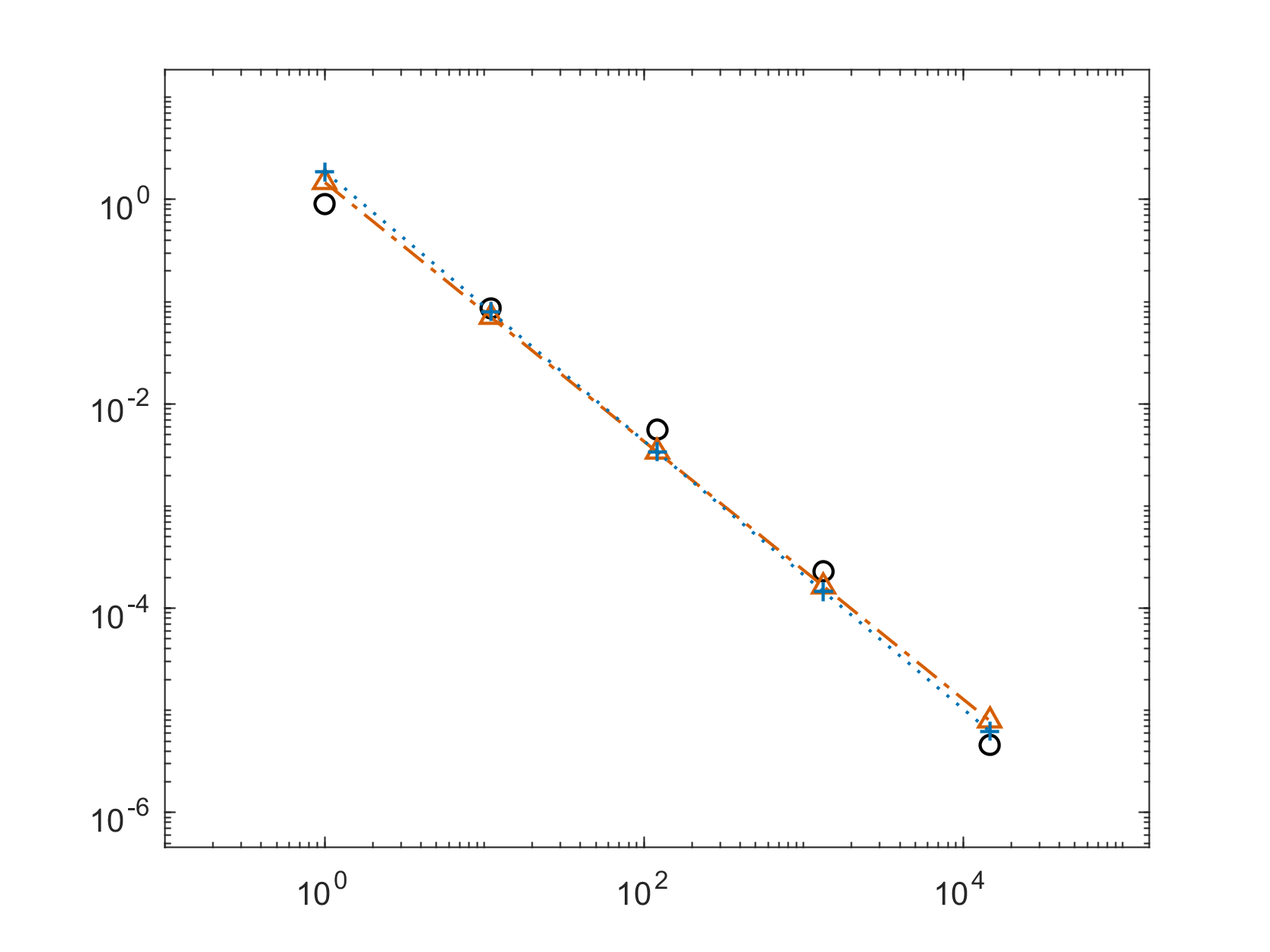}
            \caption{$K = 5,\,m = 10$}
        \end{subfigure}%
        \hfill
        \begin{subfigure}{0.3\textwidth}
            \centering
            \includegraphics[width=\linewidth]{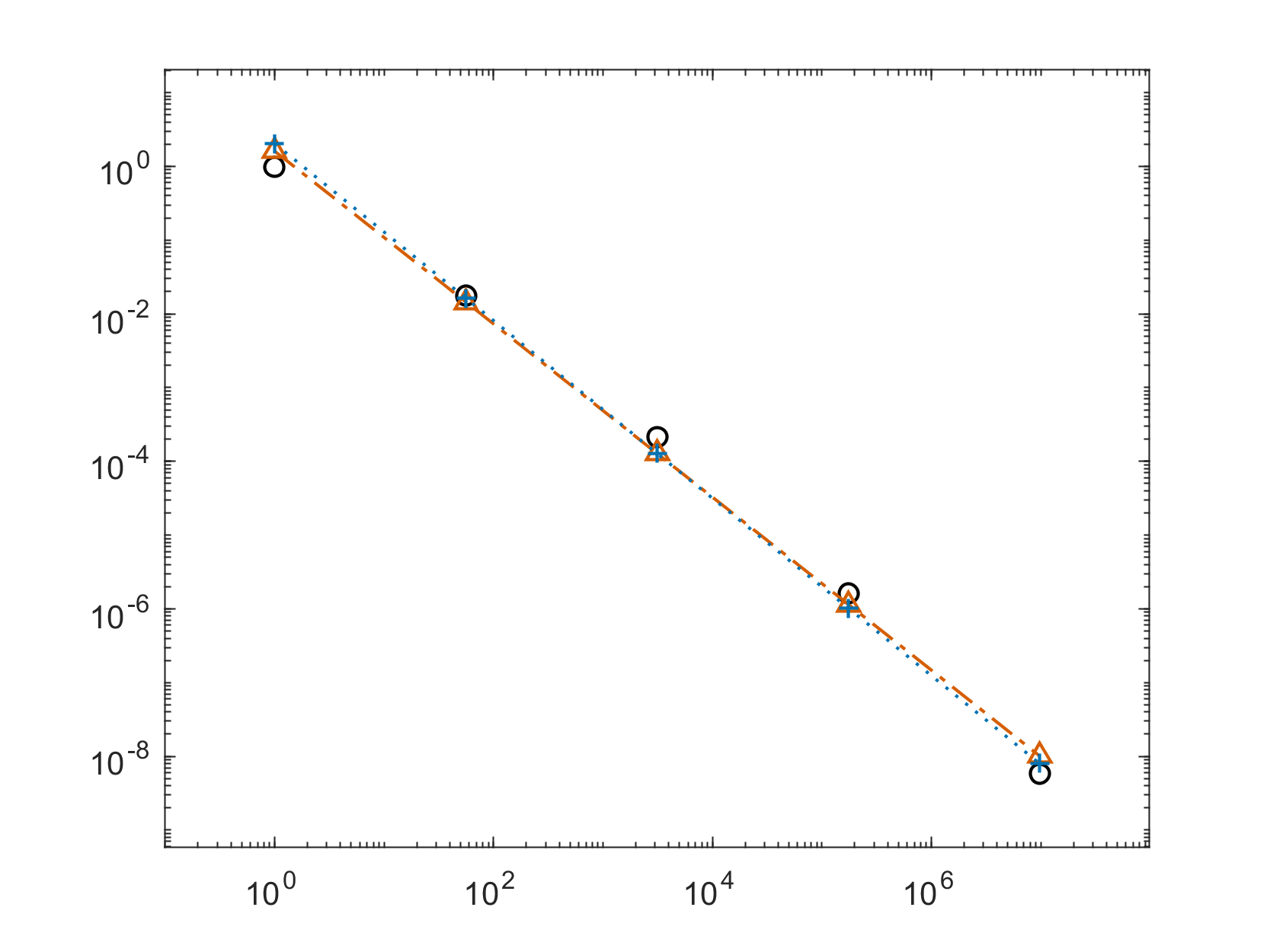}
            \caption{$K = 5,\,m = 55$}
        \end{subfigure}%
        \hfill
        \begin{subfigure}{0.3\textwidth}
            \centering
            \includegraphics[width=\linewidth]{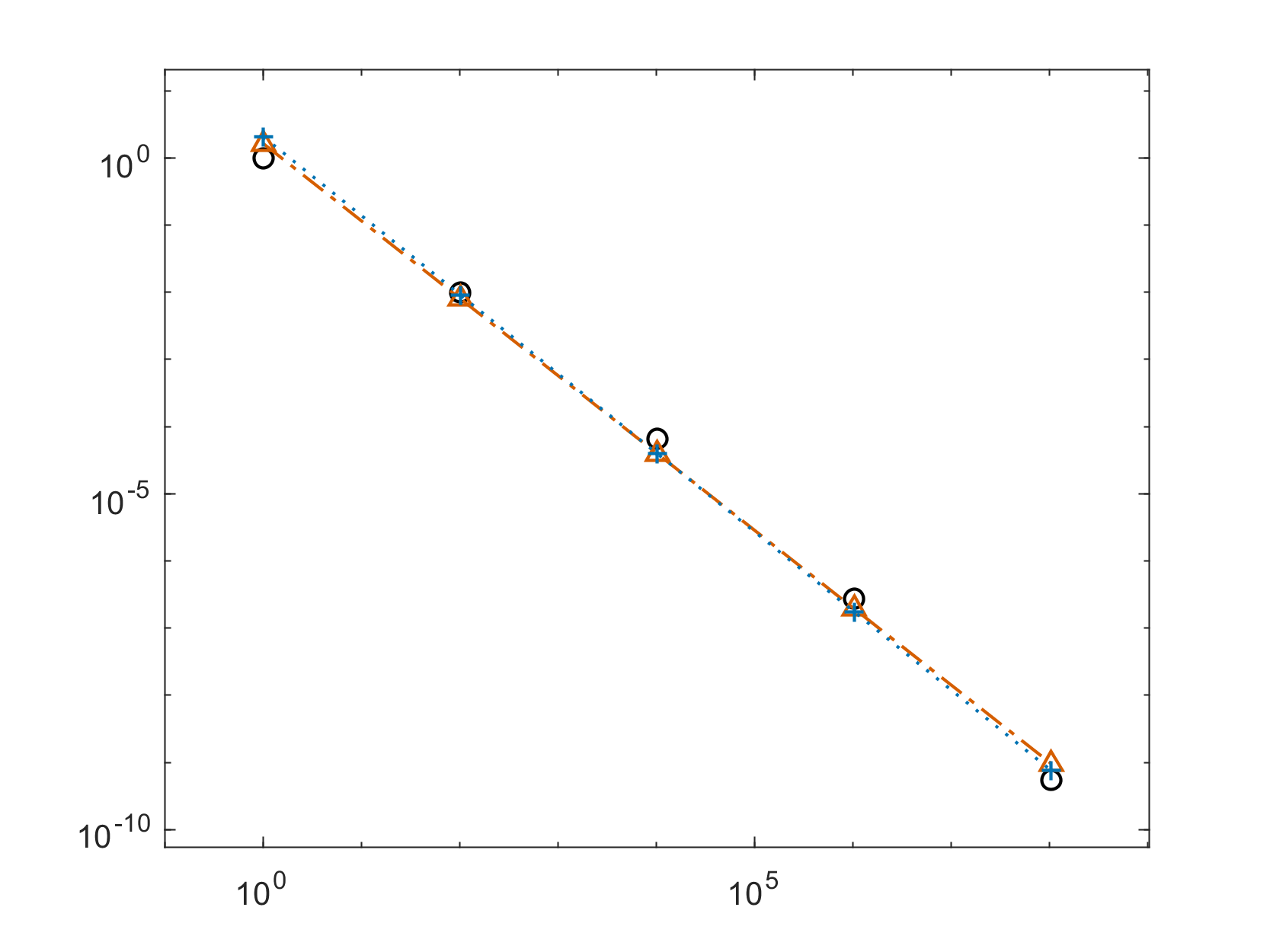}
            \caption{$K = 5,\,m = 100$}
        \end{subfigure}
        
        \begin{subfigure}{0.3\textwidth}
            \centering
            \includegraphics[width=\linewidth]{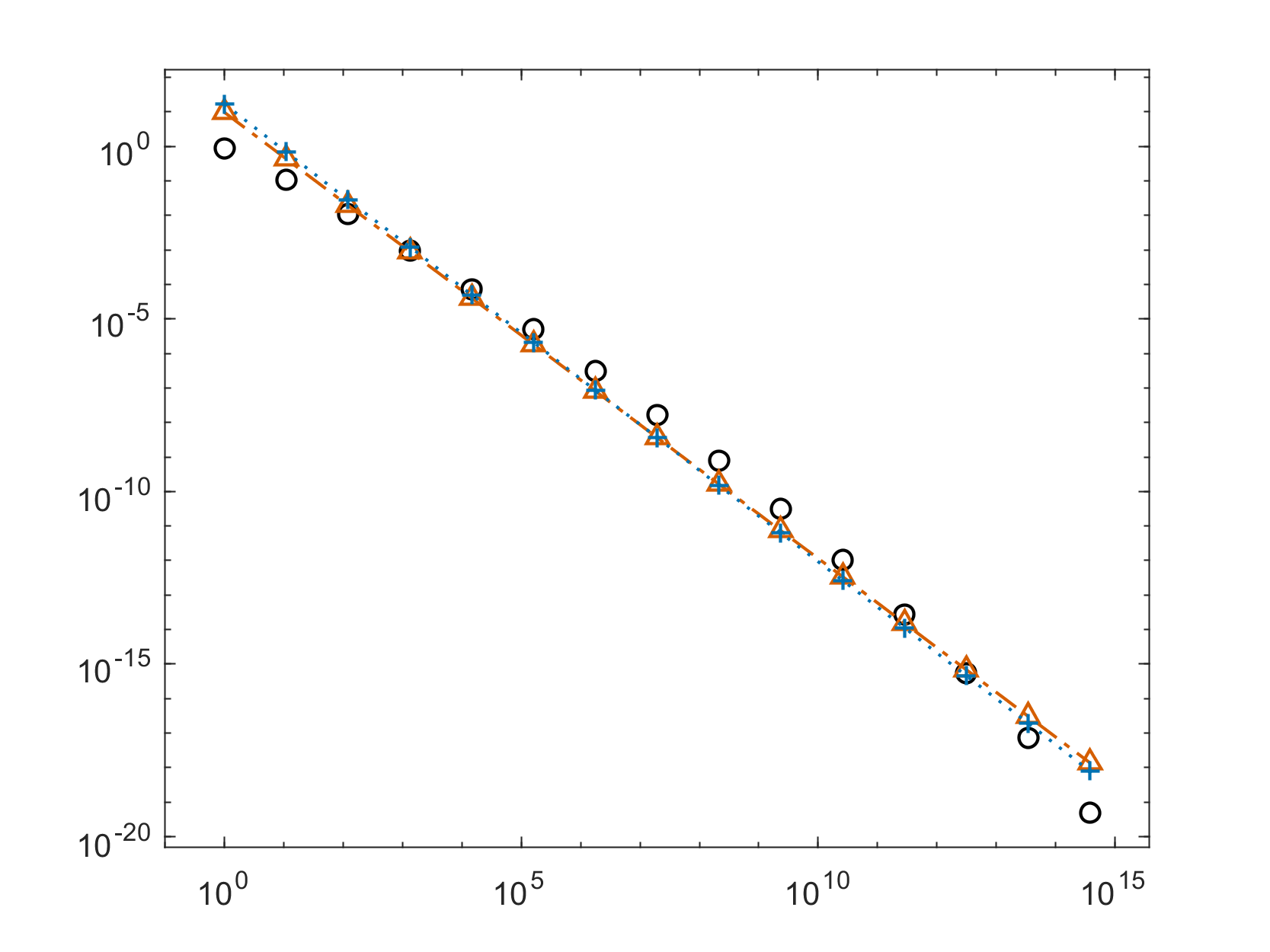}
            \caption{$K = 15,\,m = 10$}
        \end{subfigure}%
        \hfill
        \begin{subfigure}{0.3\textwidth}
            \centering
            \includegraphics[width=\linewidth]{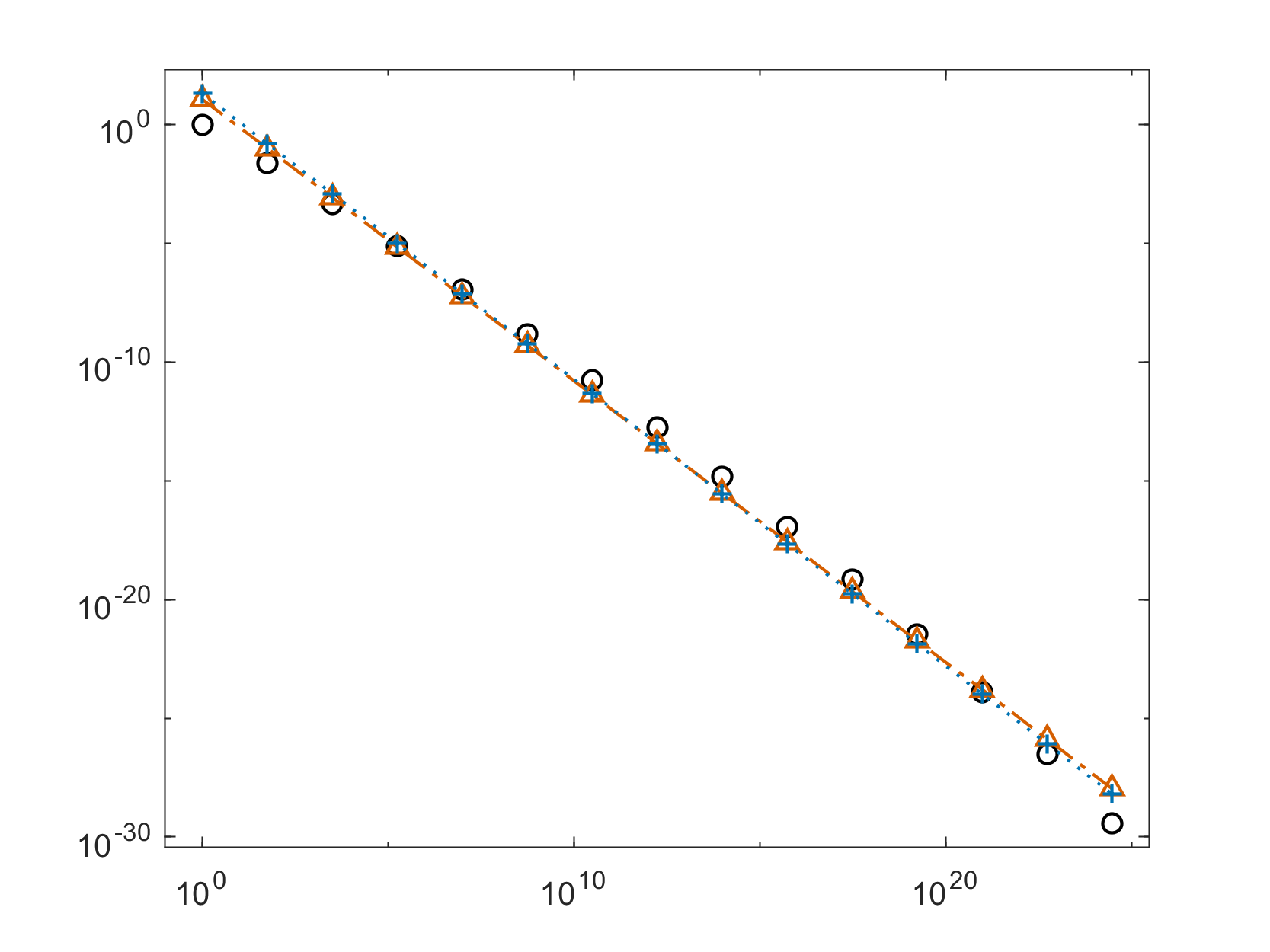}
            \caption{$K = 15,\,m = 55$}
        \end{subfigure}%
        \hfill
        \begin{subfigure}{0.3\textwidth}
            \centering
            \includegraphics[width=\linewidth]{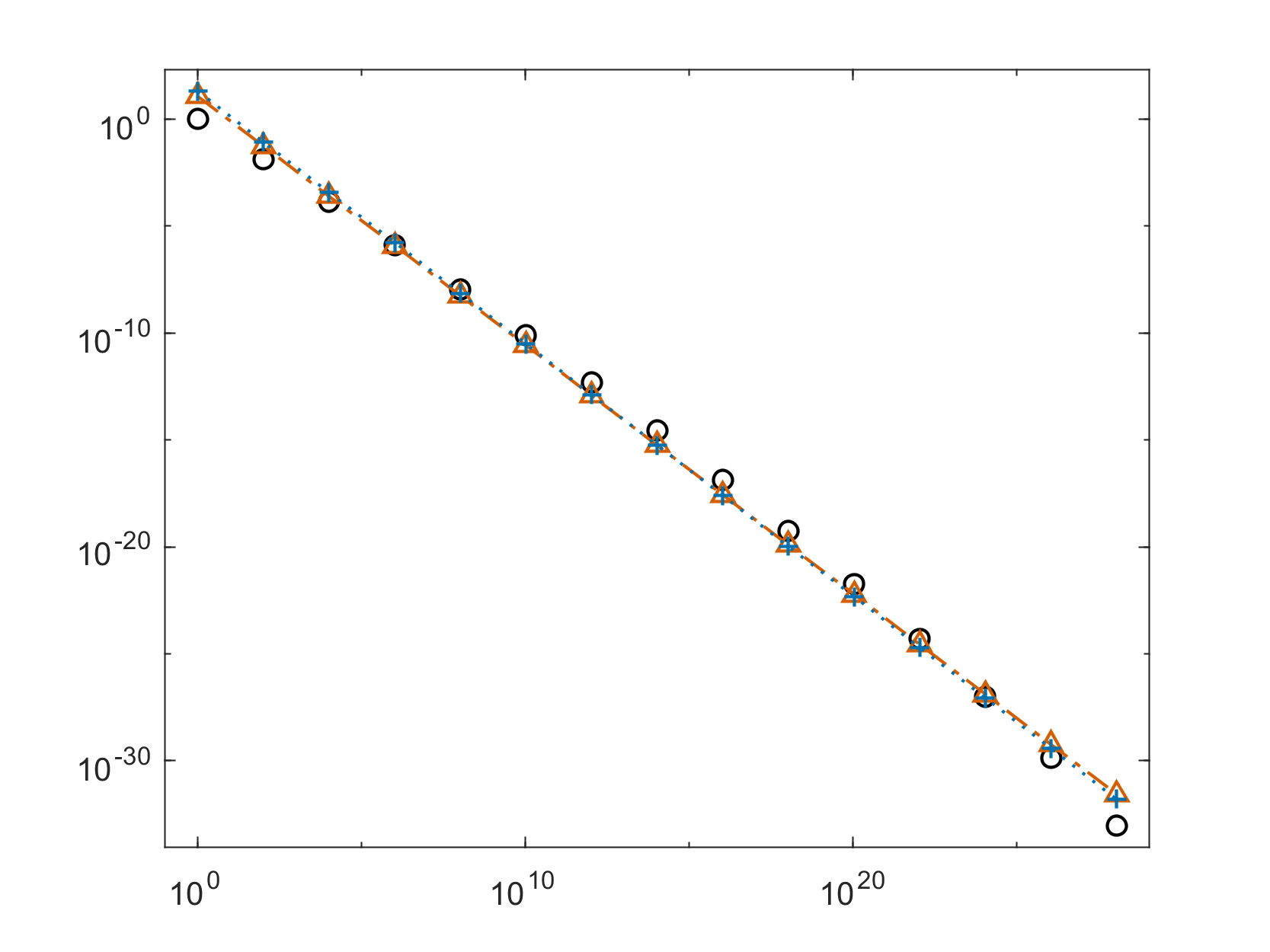}
            \caption{$K = 15,\,m = 100$}
        \end{subfigure}
        
        \begin{subfigure}{0.3\textwidth}
            \centering
            \includegraphics[width=\linewidth]{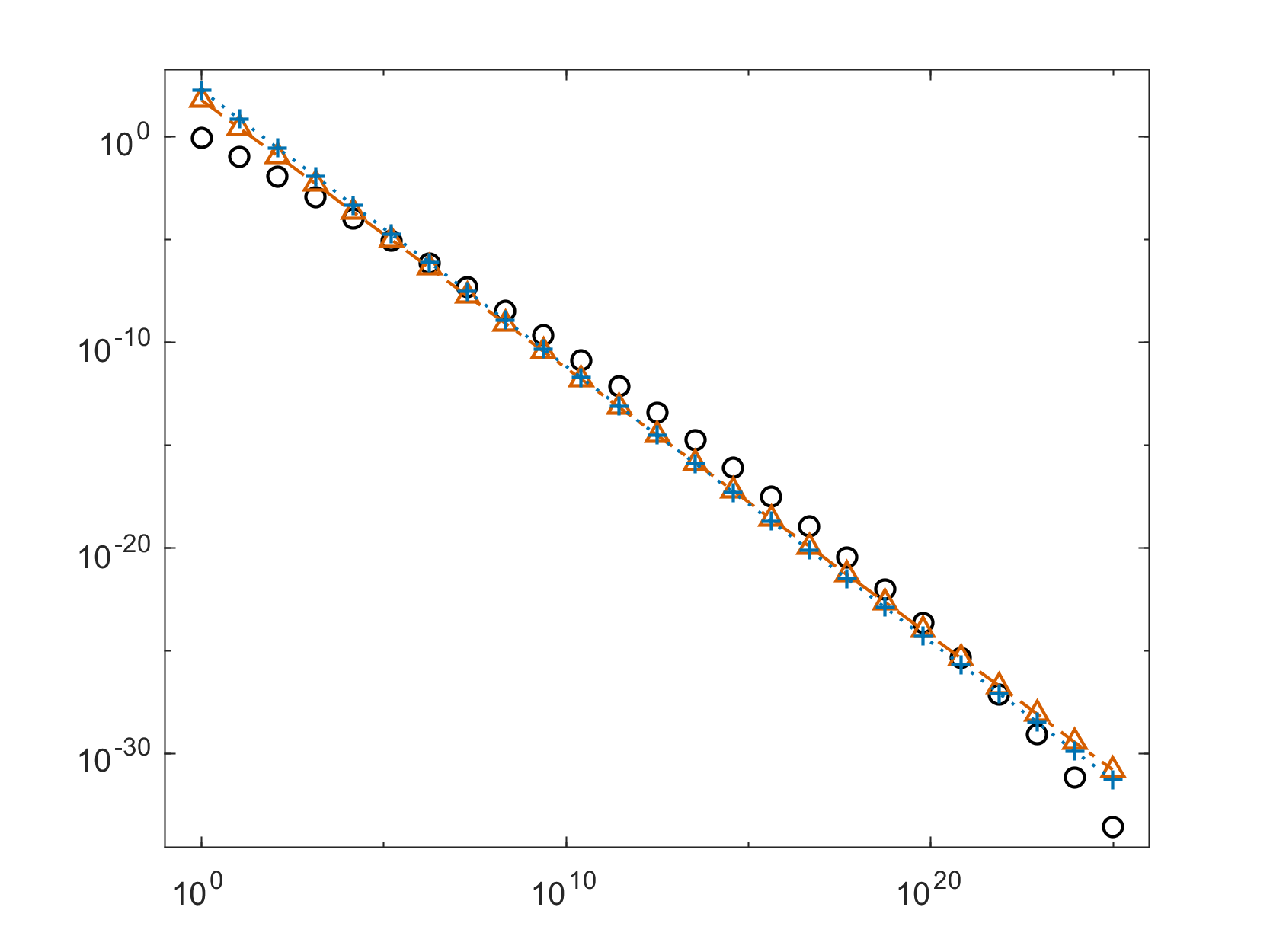}
            \caption{$K = 25,\,m = 10$}
        \end{subfigure}%
        \hfill
        \begin{subfigure}{0.3\textwidth}
            \centering
            \includegraphics[width=\linewidth]{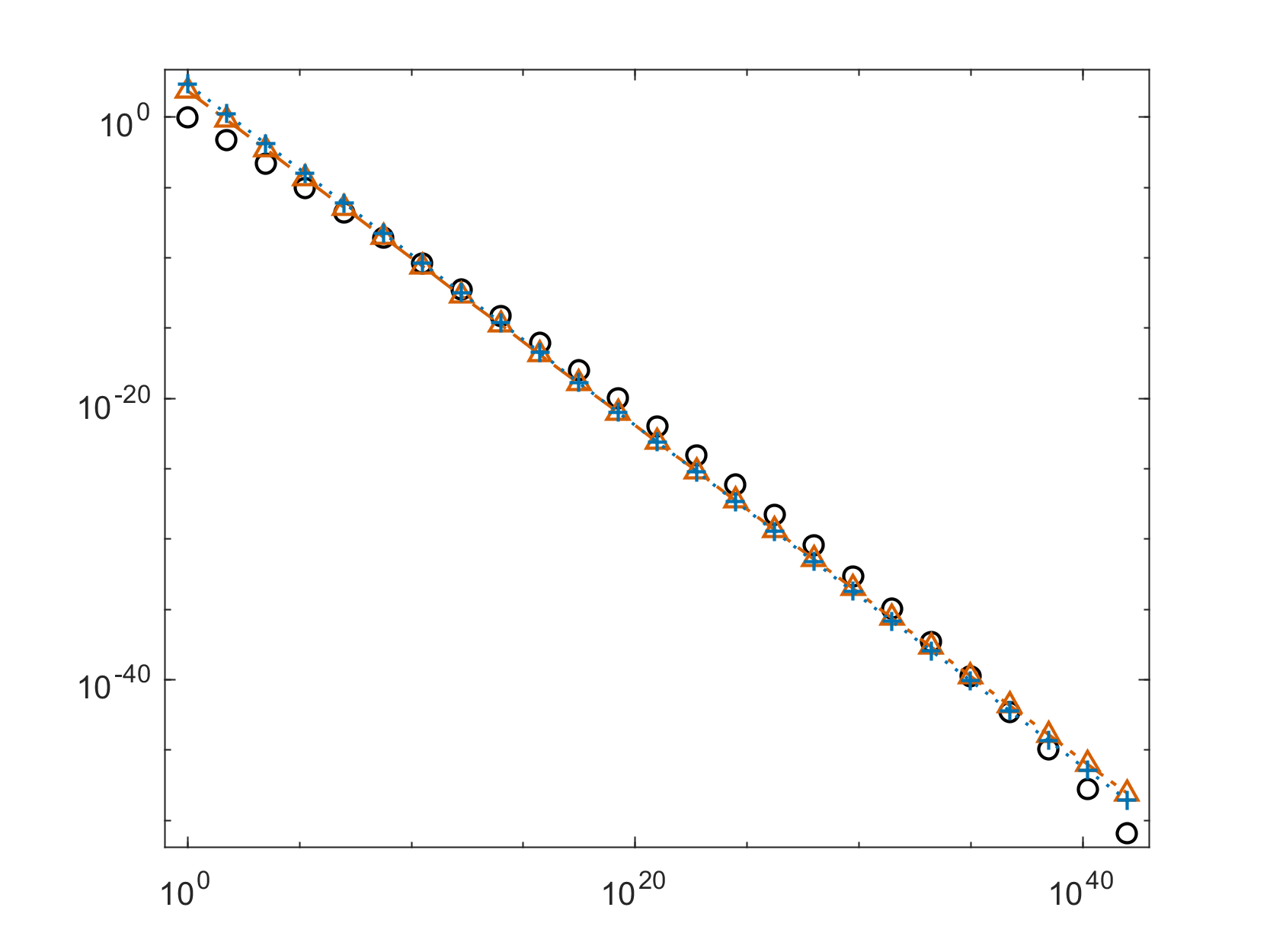}
            \caption{$K = 25,\,m = 55$}
        \end{subfigure}%
        \hfill
        \begin{subfigure}{0.3\textwidth}
            \centering
            \includegraphics[width=\linewidth]{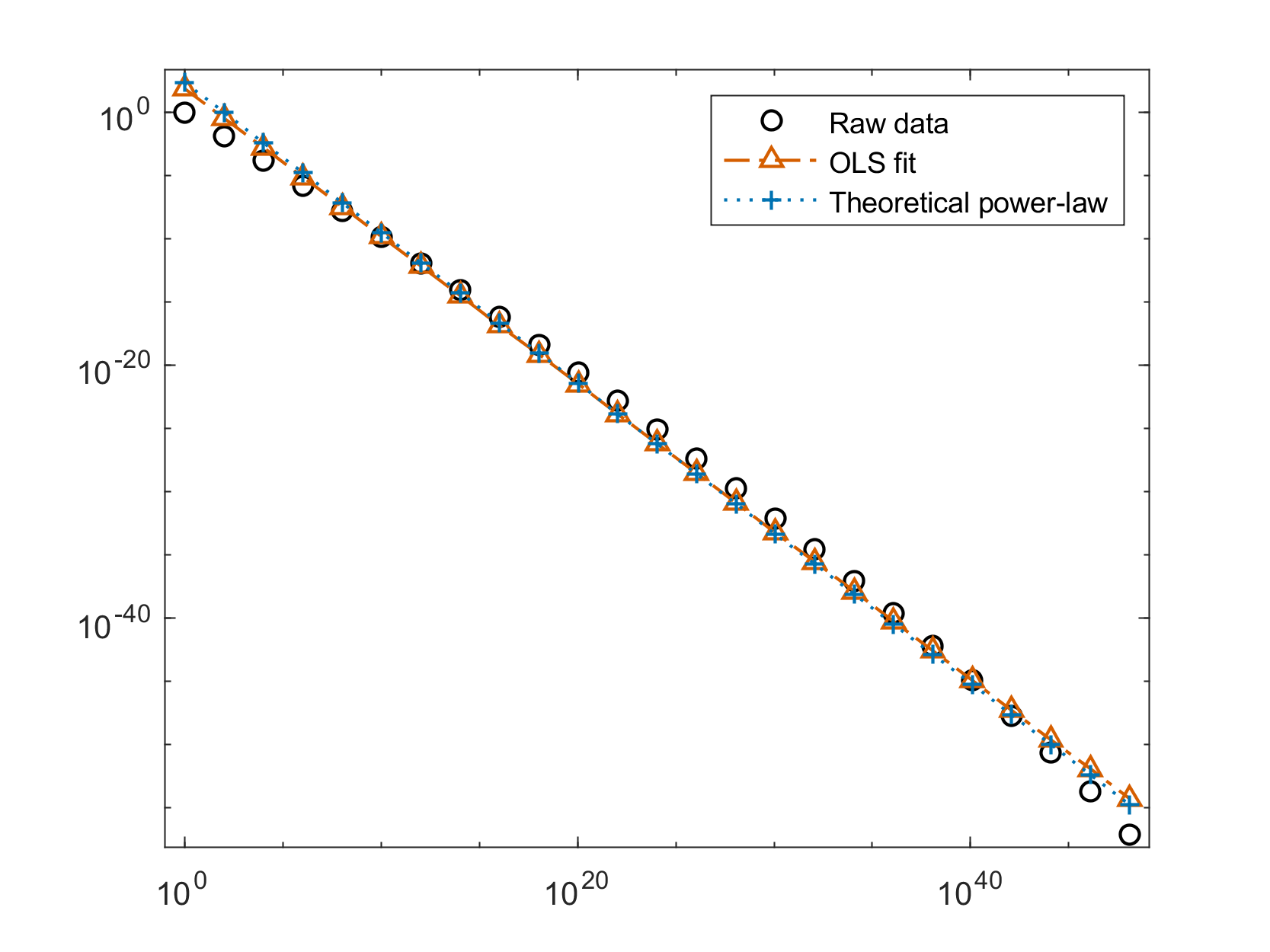}
            \caption{$K = 25,\,m = 100$}
        \end{subfigure}
        
        \vspace{0.5cm}
         Generalized Degrees, $k_{K, 1}$
    \end{minipage}
    
    \caption{\textbf{Plot of the 1-dimensional generalized degree distribution $P_{K, 1}(k)$ for networks under different combinations of parameters $K$ and $m$ in a double-logarithmic coordinate system. }}
    \label{fig:GDD1}
\end{figure}

\begin{figure}[htbp]
    \centering
    
    \begin{minipage}{0.04\textwidth}
        \centering
        \rotatebox{90}{Generalized degree distributions, $P_{K, 2}(k_{K, 2})$}
    \end{minipage}%
    \hfill
    \begin{minipage}{0.94\textwidth}
        \centering
        
        \begin{subfigure}{0.3\textwidth}
            \centering
            \includegraphics[width=\linewidth]{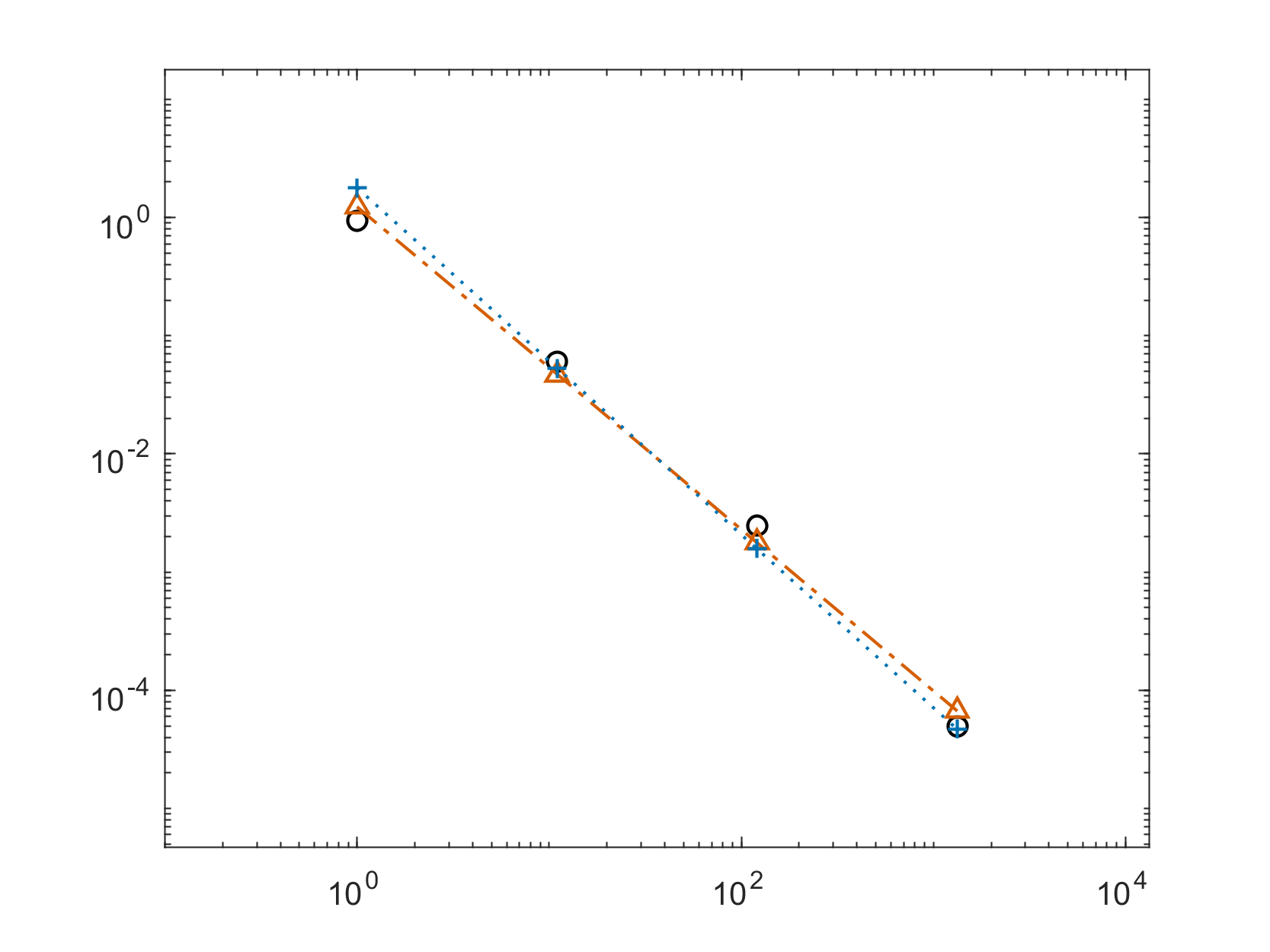}
            \caption{$K = 5,\,m = 10$}
        \end{subfigure}%
        \hfill
        \begin{subfigure}{0.3\textwidth}
            \centering
            \includegraphics[width=\linewidth]{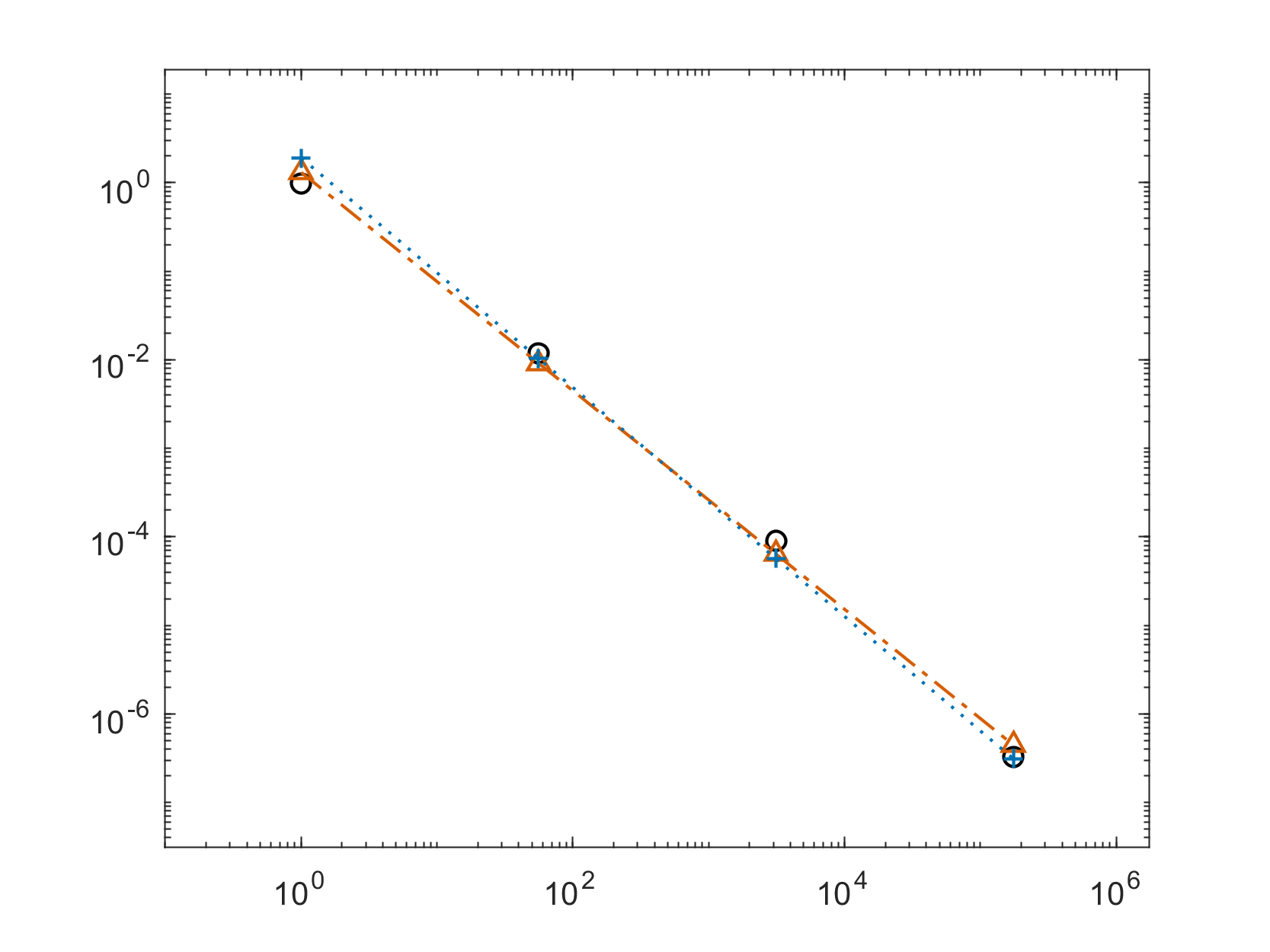}
            \caption{$K = 5,\,m = 55$}
        \end{subfigure}%
        \hfill
        \begin{subfigure}{0.3\textwidth}
            \centering
            \includegraphics[width=\linewidth]{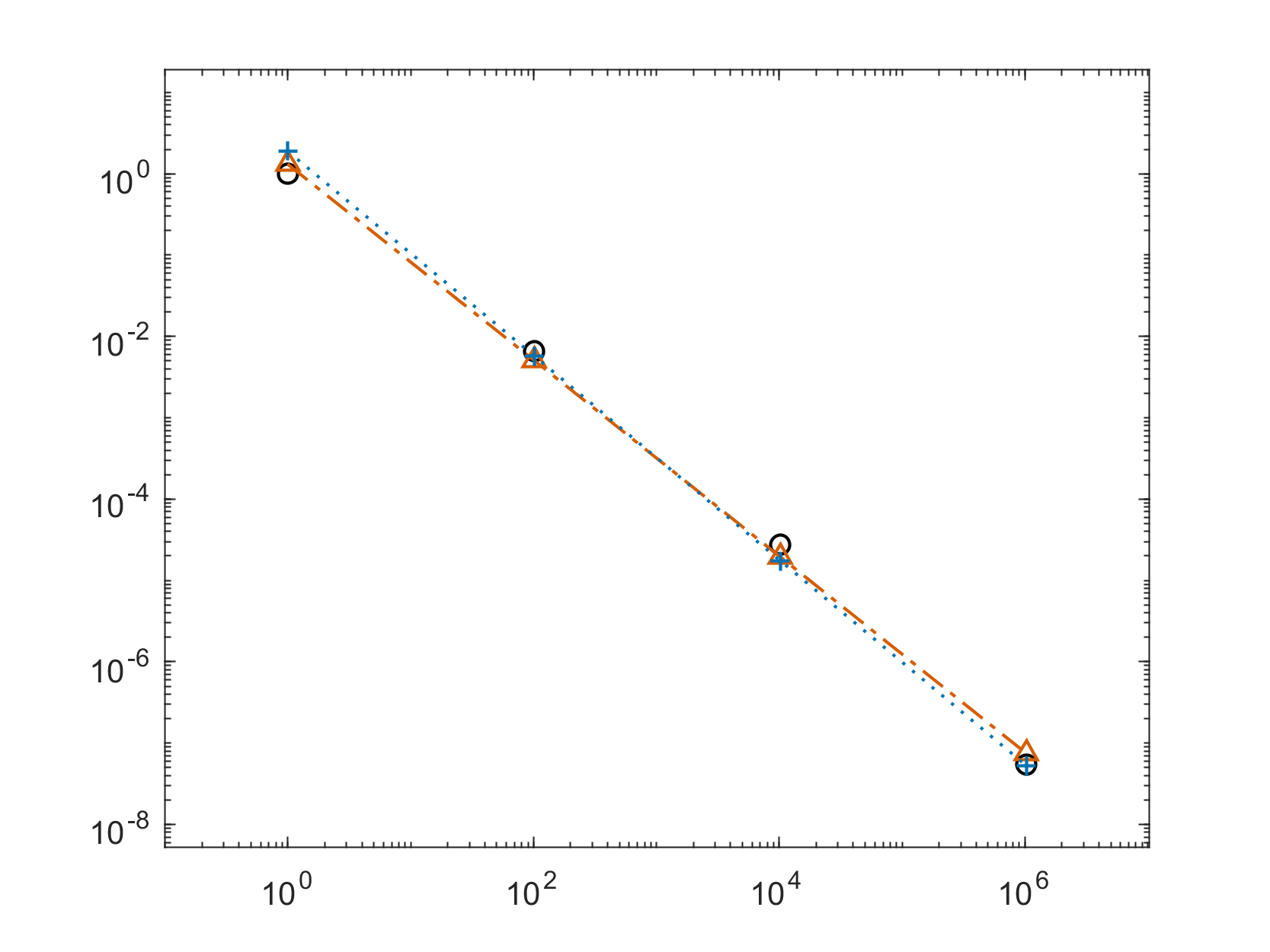}
            \caption{$K = 5,\,m = 100$}
        \end{subfigure}
        
        \begin{subfigure}{0.3\textwidth}
            \centering
            \includegraphics[width=\linewidth]{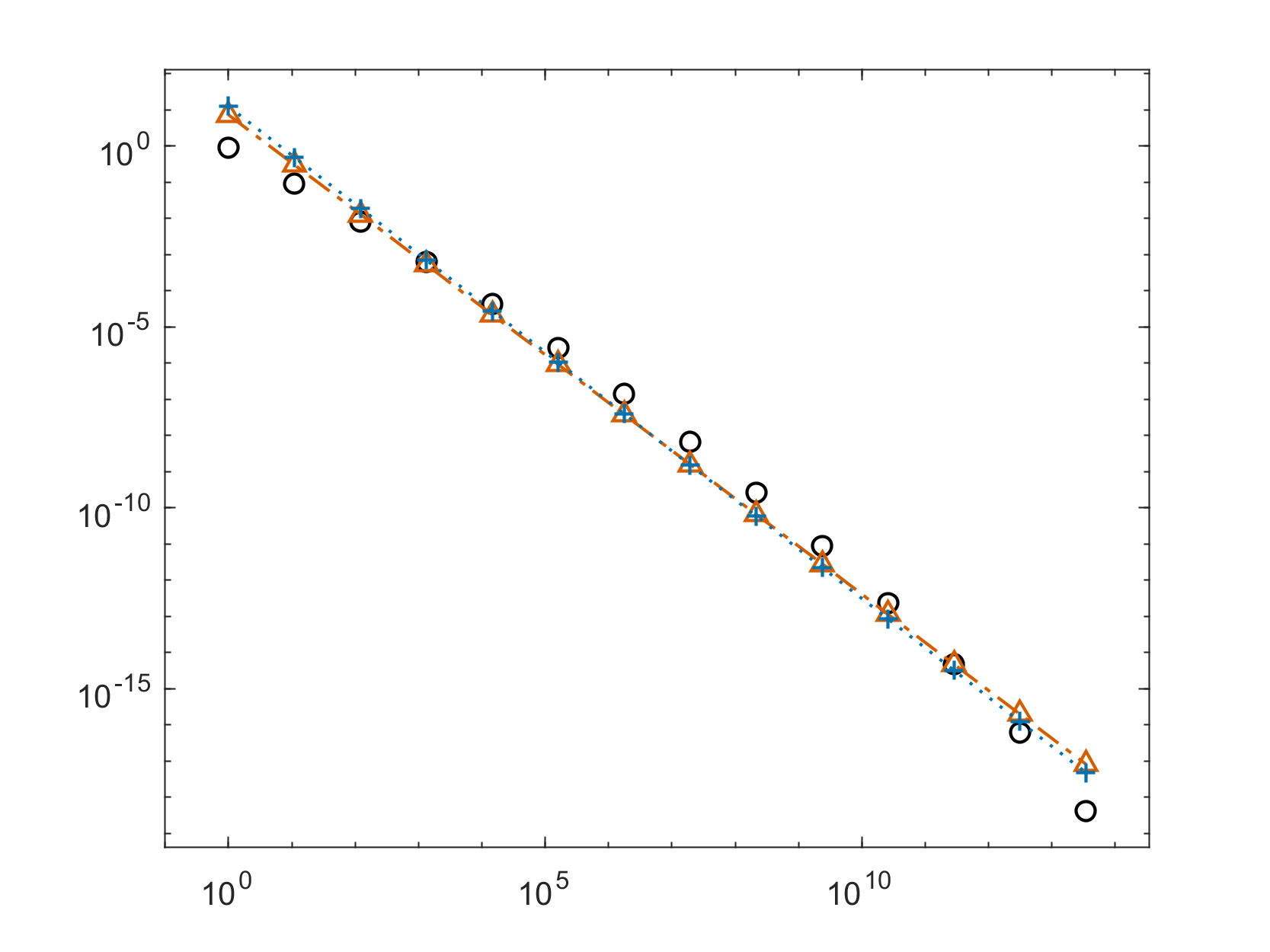}
            \caption{$K = 15,\,m = 10$}
        \end{subfigure}%
        \hfill
        \begin{subfigure}{0.3\textwidth}
            \centering
            \includegraphics[width=\linewidth]{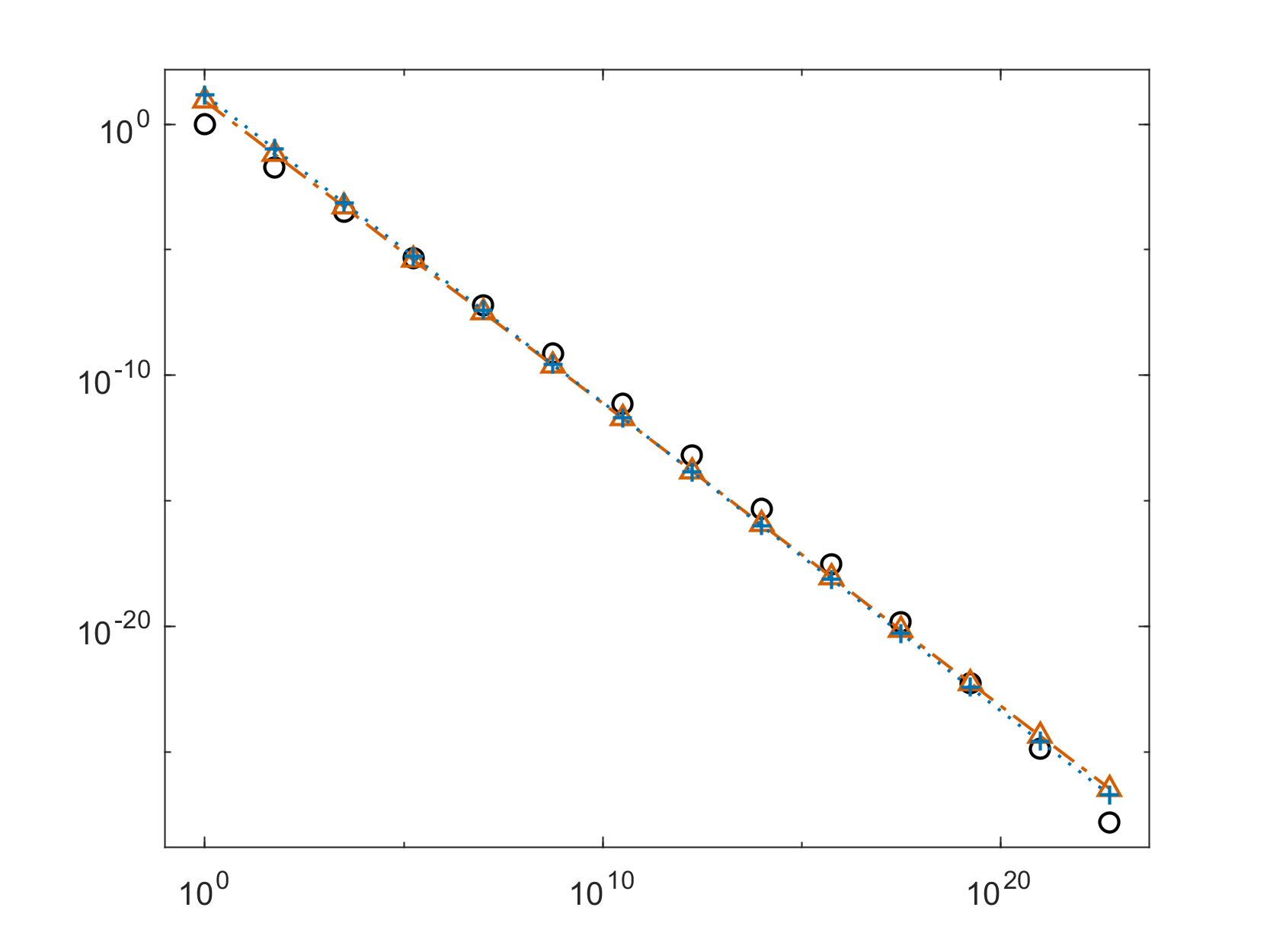}
            \caption{$K = 15,\,m = 55$}
        \end{subfigure}%
        \hfill
        \begin{subfigure}{0.3\textwidth}
            \centering
            \includegraphics[width=\linewidth]{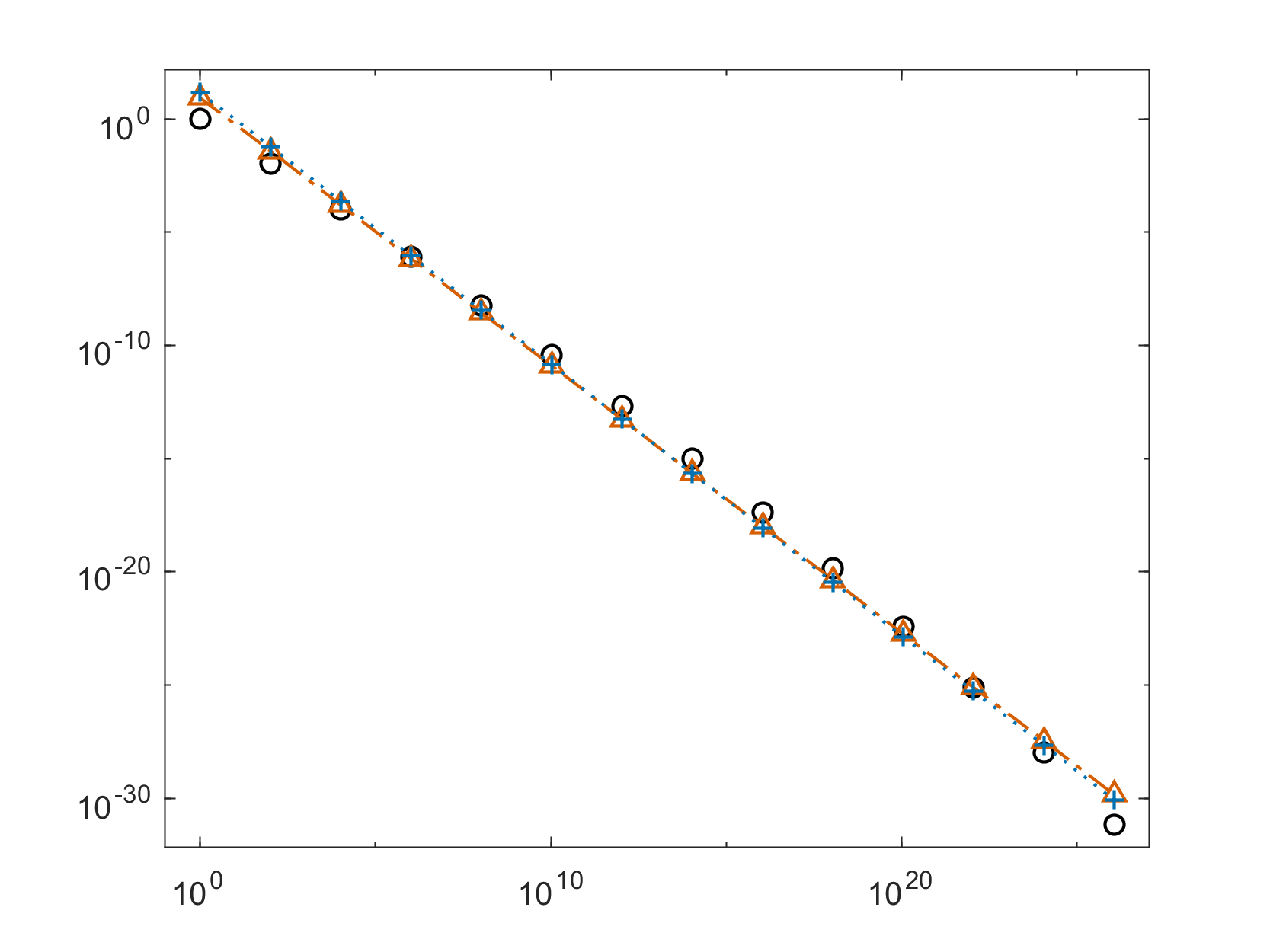}
            \caption{$K = 15,\,m = 100$}
        \end{subfigure}
        
        \begin{subfigure}{0.3\textwidth}
            \centering
            \includegraphics[width=\linewidth]{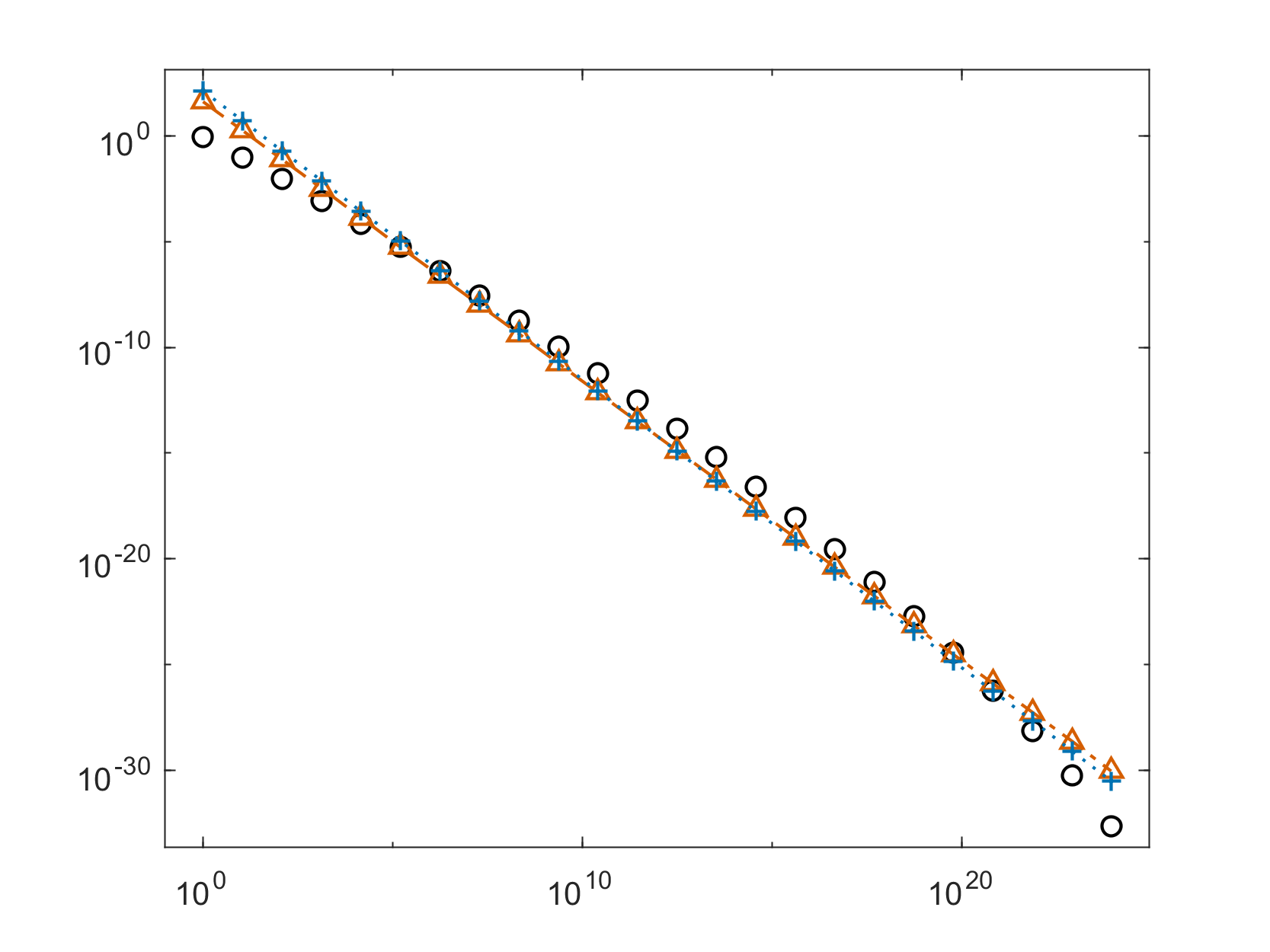}
            \caption{$K = 25,\,m = 10$}
        \end{subfigure}%
        \hfill
        \begin{subfigure}{0.3\textwidth}
            \centering
            \includegraphics[width=\linewidth]{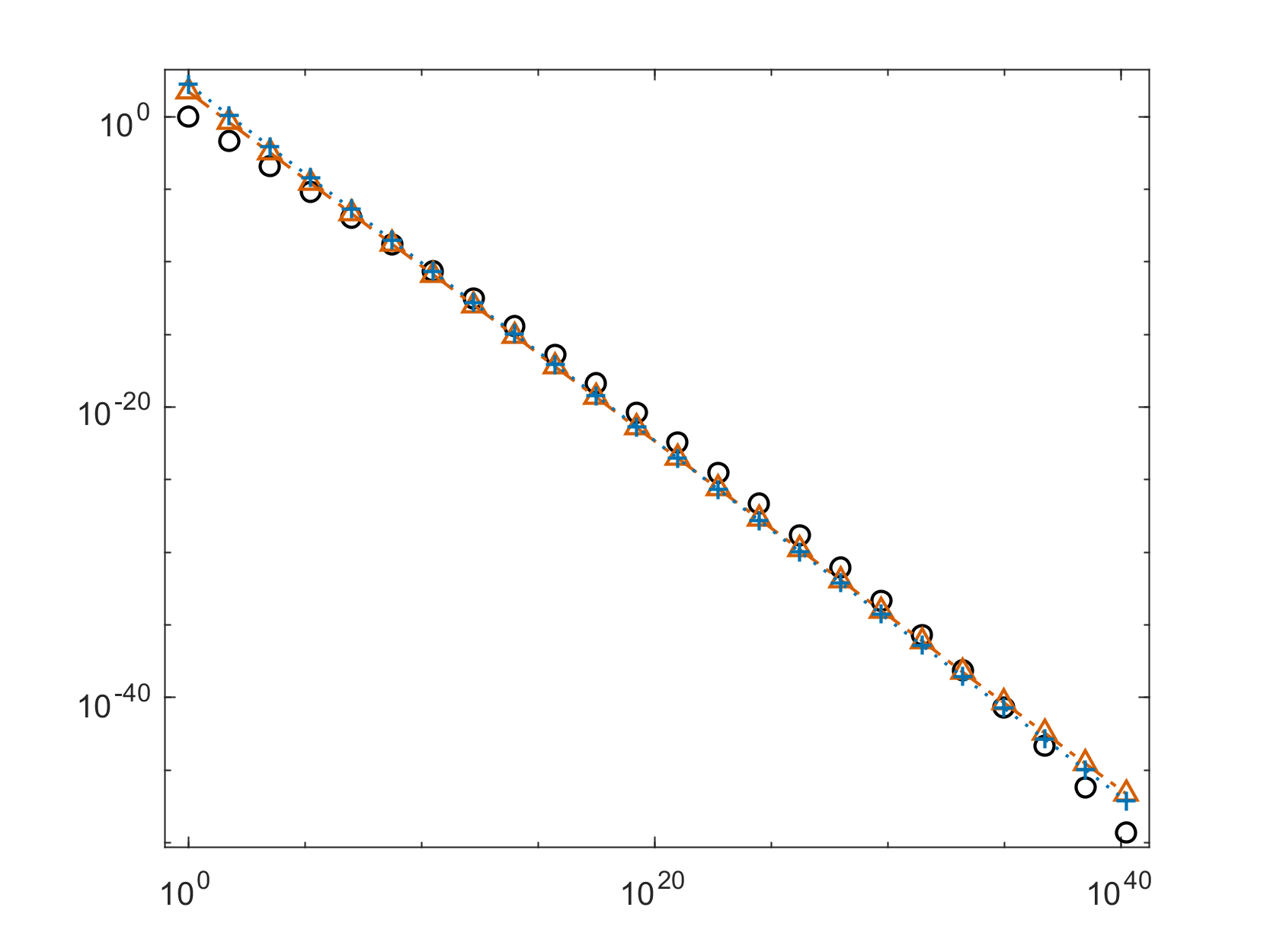}
            \caption{$K = 25,\,m = 55$}
        \end{subfigure}%
        \hfill
        \begin{subfigure}{0.3\textwidth}
            \centering
            \includegraphics[width=\linewidth]{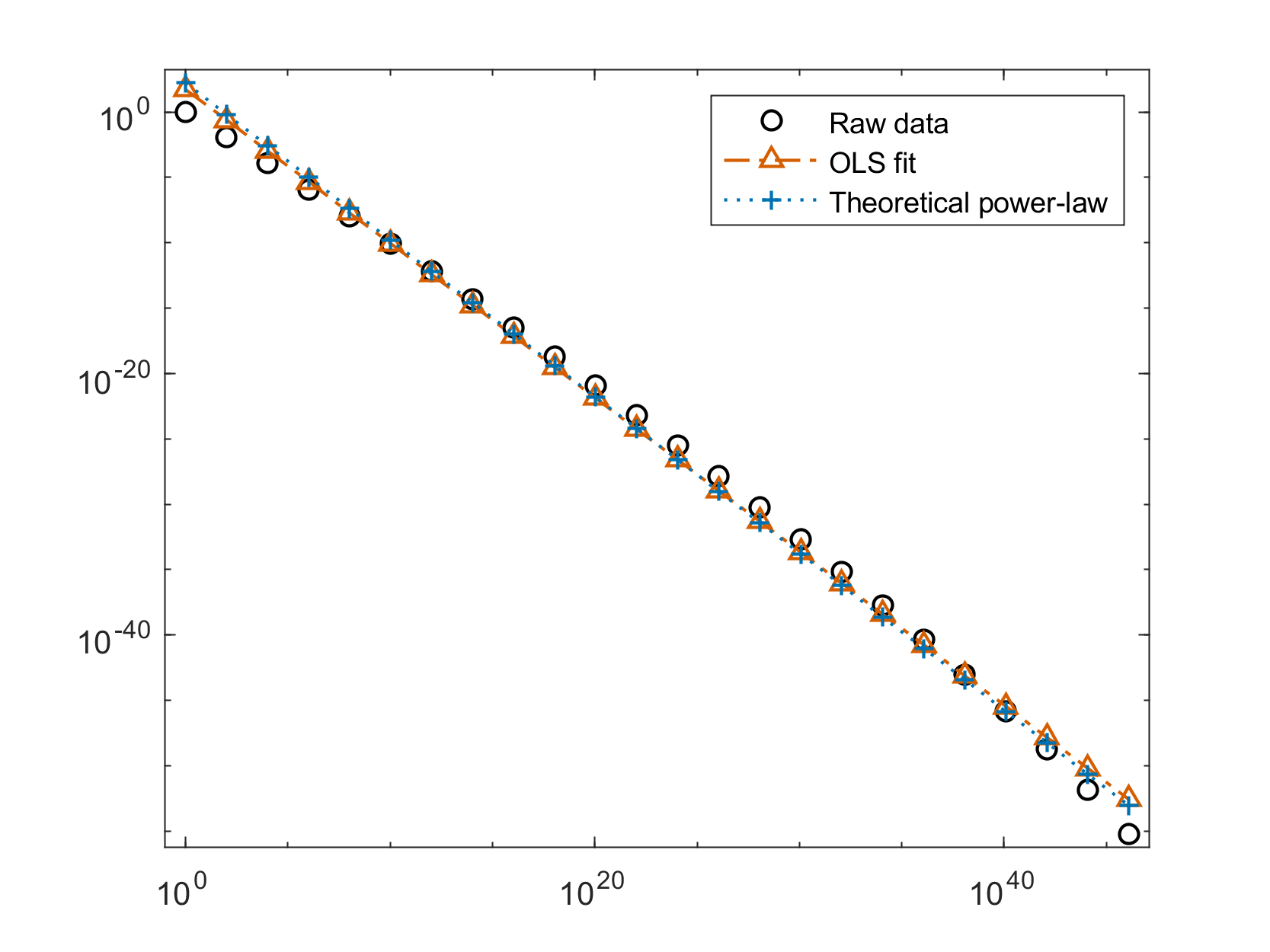}
            \caption{$K = 25,\,m = 100$}
        \end{subfigure}
        
        \vspace{0.5cm}
         Generalized Degrees, $k_{K, 2}$
    \end{minipage}
    
    \caption{\textbf{Plot of the 2-dimensional generalized degree distribution $P_{K, 2}(k)$ for networks under different combinations of parameters $K$ and $m$ in a double-logarithmic coordinate system. }}
    \label{fig:GDD2}
\end{figure}

\section{Conclusion}
This study proposes an iterative generative model for higher-order fractal scale-free networks. First, a fractal network is generated through an iterative process. Subsequently, the clique complex method is employed to derive a higher-order network; more precisely, a fractal pure simplicial complex. The generated network $\mathcal{K}_{t}(K, m)$ is determined by the dimension $K$ of the simplicial complex, the multiplier $m$, and the iteration count $t$. Subsequently, we investigate the properties of the network $\mathcal{K}_{t}(K, m)$. Specifically, we examine:

(1) The number of $K$-dimensional simplices in the network. Based on the network's generation process, the networks produced in this paper consist of $K$-simplices and their faces, thus forming pure simplicial complexes of dimension $K$. Furthermore, the number of $K$-dimensional simplices in the network increases exponentially with the iteration count $t$, amounting to $S^t$;

(2) Fractal characteristics of networks. Theoretical analysis based on similarity dimensions and experimental verification using box-counting dimensions demonstrate that the generated networks exhibit fractal properties, with controllable fractal dimensions; 

(3) Network generalized degree distribution. When the parameter $m$ is large, the networks generated in this paper exhibit scale-free properties, with a generalized degree distribution that is scale-free. The exponent $\gamma$ of the power-law distribution has been determined. 

Future research will continue to explore other features within higher-order fractal networks.

\bibliographystyle{unsrt}
\bibliography{reference.bib}

\end{document}